\theoremstyle{definition}
\newtheorem{definition}{Definition}[section]
\theoremstyle{plain}
\newtheorem{thm}[definition]{Theorem}
\newtheorem{lemma}[definition]{Lemma}
\newtheorem{prop}[definition]{Proposition}
\newtheorem{cor}[definition]{Corollary}
\theoremstyle{remark}
\newtheorem{rem}[definition]{Remark}
\newcommand{\U}{\mathcal{U}}
\newcommand{\LIE}{\mathbf{L}}
\newcommand{\LU}{\mathcal{LU}}
\newcommand{\Mstar}{\mathcal{M}_*}
\newcommand{\Mcostar}{\mathcal{M}^*}
\newcommand{\qu}{\mathbb{Q}}
\newcommand{\obb}{\mathbb{O}}
\newcommand{\CL}{\mathcal{C}^{L}}
\renewcommand{\L}{\mathcal{L}}
\newcommand{\Lbox}{\boxtimes_\mathcal{L}}
\newcommand{\Fbox}[2]{\mathrm{F}_{\Lbox}(#1,#2)}
\newcommand{\I}{\mathcal{I}}
\newcommand{\IU}{\mathcal{IU}}
\DeclareMathOperator*{\colim}{colim}
\def \wrt {with respect to }
\newcommand{\R}{\mathbb{R}}
\newcommand{\N}{\mathbb{N}}
\def \fd {finite-dimensional }
\newcommand{\one}{\mathbb{1}}
\newcommand\restr[2]{{
\left.\kern-\nulldelimiterspace 
#1 
\vphantom{\big|} 
\right|_{#2} 
}}
\author{Benjamin B\"{o}hme}
\address{Department of Mathematical Sciences \\
University of Copenhagen \\
Universitetsparken 5 \\
2100 Copenhagen \\
Denmark}
\email{Boehme@math.ku.dk}
\title{Global model structures for $*$-modules}
\subjclass[2000]{55P91, 18G55}
\keywords{Global homotopy theory, equivariant homotopy theory, model category,
orthogonal spaces, *-modules}
\begin{document}

\begin{abstract}
We extend Schwede's work on the unstable global homotopy theory of orthogonal
spaces and $\L$-spaces to the category of $*$-modules (i.e., unstable $S$-modules).
We prove a theorem which transports model structures and their properties from
$\L$-spaces to $*$-modules and show that the resulting global model structure
for $*$-modules is monoidally Quillen equivalent to that of orthogonal spaces.
As a consequence, there are induced Quillen equivalences between the associated
model categories of monoids, which identify equivalent models for the global
homotopy theory of $A_\infty$-spaces.
\end{abstract}

\maketitle
\thispagestyle{empty}

\section{Introduction}
\label{sect:intro}

Global homotopy theory is equivariant homotopy theory \wrt compatible actions of
the family of all compact Lie groups. Unstable global homotopy theory has been
described by Schwede \cite{schwede:GHTv1, schwede:OOUv1} in terms of
various Quillen equivalent model categories that make precise the idea of spaces with
simultaneous group actions, studied up to \emph{global equivalence}, i.e., weak $G$-homotopy
equivalence \wrt all compact Lie groups $G$, in a compatible way.

Two of these models are the categories $\IU$ and $\LU$ of \emph{orthogonal spaces} and
$\L$-\emph{spaces}, respectively. The former is a category of diagram spaces indexed on real
inner product spaces, the latter is the category of spaces equipped with continuous actions of the
topological monoid $\L(1)$ of linear isometric embeddings $\R^\infty \to \R^\infty$ (i.e., the
space of unary operations in the linear isometries operad).

The category of orthogonal spaces admits a ``global'' model structure that is compatible with the
symmetric monoidal structure given by Day convolution. The model structure lifts to the
associated category of monoids and thus models the unstable global homotopy theory of
$A_\infty$ spaces. The same is true for the category of $\L$-\emph{spaces}, up
to a small defect: The operadic box product $\Lbox$ only defines a ``weak'' symmetric monoidal
product that is unital up to global equivalence. The full subcategory $\Mstar \subseteq \LU$ of
$*$-\emph{modules} is spanned by objects such that the unital transformation is an isomorphism,
and is thus symmetric monoidal in the usual sense. It is the unstable analogue of the category of
$S$-modules \cite{EKMM}.

\textbf{Main results:}
Our first main result, Theorem~\ref{thm_B_strict}, establishes a symmetric monoidal model
structure on the category of $*$-modules with weak equivalences the underlying global equivalences of
$\L$-spaces, together with explicit (weak) monoidal Quillen equivalences to both orthogonal spaces
and $\L$-spaces. Theorem~\ref{thm_B_strict} is a direct consequence of a more general ``Transport
Theorem'' (Theorem~\ref{thm_A_strict}) that transports model structures on $\L$-spaces with global
equivalences and well-behaved cofibrations to the category of $*$-modules. It was first proven
in the author's (unpublished) Master's thesis \cite{boehme:masters}. A key step in the proof is
that the category of $*$-modules can be identified with a category of algebras over a monad, which
has previously been used in \cite{EKMM, BCS:THH} to construct non-equivariant model structures on
$S$-modules and $*$-modules, respectively.

Our second main result, Theorem~\ref{thm_C_strict}, lifts the above global model structure on
$*$-modules to a model structure on the category of monoids in $*$-modules. It is Quillen
equivalent to Schwede's global model structure on monoids in orthogonal spaces.
In other words,
monoids in orthogonal spaces and in $*$-modules form equivalent models for the global homotopy
theory of $A_\infty$-spaces.

The global model structure on orthogonal spaces also lifts to commutative monoids. It remains to
be seen whether the analogous result is true for $*$-modules, i.e., whether $*$-modules model the
global homotopy theory of $E_\infty$ spaces.

\textbf{Relation to other work:}
Orthogonal spaces, $\L$-spaces and $*$-modules are the unstable counterparts of
the category of orthogonal spectra, the category of $\L$-spectra, and the category of
$S$-modules, respectively. We refer to \cite{lind:diagram_spaces} for a discussion of
non-equivariant model structures, the relationship with the classical unstable and stable
homotopy categories and further references.
Our main source for properties of $\L$-spaces and $*$-modules is the discussion by Blumberg,
Cohen and Schlichtkrull in Section~4 of \cite{BCS:THH}.

For a fixed group $G$, orthogonal spectra and $S$-modules with
additional structure encoding the $G$-action have been studied equivariantly, see
e.g. \cite{mandellmay:eqvar_orth_spectra, HHR, schwede:lecture_ESHT}.
These additional data are not necessary in global homotopy theory. For
each compact Lie group $G$, the $G$-equivariant homotopy groups of an ordinary
orthogonal spectrum can be defined by evaluating only at $G$-representations. This idea
gives rise to the global homotopy theory of orthogonal spectra and orthogonal
spaces developed by Schwede in his monograph \cite{schwede:GHTv1}.
Schwede's work includes variants that don't take into account all
compact Lie groups, but only a certain family of groups. Hausmann
\cite{hausmann:sym_spectra_global_fin} gave an equivalent description in the
case of all finite groups, using symmetric spectra as a model.

\textbf{Organization:}
Section~\ref{sect:prelim} provides background material on orthogonal spaces, $\L$-spaces and
relevant functors. In Section~\ref{sect:MS}, we discuss Schwede's global model structure
for $\L$-spaces and our global model structure on $*$-modules, assuming the statement of the
Transport Theorem (Theorem~\ref{thm_A_strict}). We lift our model structure and Quillen
equivalences to the level of monoids in Section~\ref{sect:monoids}. Finally, the proof of the
Transport Theorem and other technical details are given in Section \ref{sect:proof_thm_A}.

\textbf{Conventions:}
We work over the category $\U$ of compactly generated weak Hausdorff spaces.
A \emph{model category} is a Quillen model category as defined in
\cite[Def.~3.3]{DS:modelcats}. The definition does not require functorial
factorizations. A \emph{monoidal model category} satisfies the pushout product
axiom and the unit axiom, see \cite[Def.~4.2.6]{hovey:modelcats}. An
$h$-\emph{cofibration} in a model category tensored over $\U$ is a map that
satisfies the homotopy extension property. In diagrams, the upper or left arrow
of an adjunction is the left adjoint.

\textbf{Acknowledgements:}
The present paper arose from the author's Master's thesis \cite{boehme:masters}
supervised by Stefan Schwede at the University of Bonn. The author
would like to thank him for numerous helpful discussions and for suggesting many
of the ideas that led to the results presented here. Moreover, the author is
grateful for many helpful comments and suggestions provided by Jesper Grodal, Markus Hausmann, Manuel
Krannich, Malte Leip, Irakli Patchkoria and an anonymous referee. \\
This research was partly supported by the Danish National Research Foundation
through the Centre for Symmetry and Deformation (DNRF92).

\section{Preliminaries}
\label{sect:prelim}

This section provides some background on the categories of $\L$-spaces,
$*$-modules and orthogonal spaces, as well as on $G$-universes and
universal subgroups. It does not contain any original results. The main sources
are Schwede's preprint \cite{schwede:OOUv1}, his monograph
\cite{schwede:GHTv1} and the article \cite{BCS:THH} by Blumberg, Cohen and
Schlichtkrull.

\subsection{$\L$-spaces and $*$-modules} \label{subsect:LU_Mstar}

Let $\LIE(V, W)$ be the space of linear isometric embeddings $V \to W$ between
two real inner product spaces of finite or countable dimension, topologized as
a subspace of $\U(V, W)$.
Write $\R^\infty := \bigoplus_\N \R$ for the standard inner product space of
countable dimension. The \emph{operad of linear isometric embeddings} $\L$ is given
by spaces $\L(n) = \LIE((\R^\infty)^n, \R^\infty)$, and structure maps induced by direct
sum and composition of maps. It is a (symmetric) $E_\infty$-operad with
$\Sigma_n$-actions via permutation of the $n$ summands of $(\R^\infty)^n$.
The space of unary operations $\L(1)$ is a topological monoid under
composition, and we will study $\L(1)$-equivariant homotopy theory.

\begin{definition}
An \emph{$\L$-space} is a space $X \in \U$ together with a continuous action
from the monoid $\L(1)$. We write $\LU$ for the category of $\L$-spaces and
$\L(1)$-equivariant maps.
\end{definition}

The category $\LU$ is bicomplete where (co-)limits are taken in the category
$\U$ of spaces and equipped with the respective (co-)limit action, because the
forgetful functor to spaces has both adjoints. Moreover, $\LU$ is enriched,
tensored and co-tensored over $\U$.

The \emph{box product} of $\L$-spaces $X$ and $Y$ is the balanced product
\[ X \Lbox Y := \L(2) \times_{\L(1)^2} (X \times Y) \]
with respect to the right $\L(1)^2$-action on $\L(2)$ given by precomposition on either summand of $\R^\infty \oplus \R^\infty$. The
space $X \Lbox Y$ is an $\L$-space via the left $\L(1)$-action on $\L(2)$ given by postcomposition.

\begin{lemma}[Hopkins' lemma, see \cite{EKMM},~Lemma~I.5.4] \label{Hopkins}
For $m, n \geq 1$, the space $\L(m+n)$ is a split coequalizer of the diagram
\begin{center}
$\xymatrix@M=10pt{
\L(2) \times \L(1)^2 \times (\L(m) \times \L(n))
\ar@<0.7ex>[r] \ar@<-0.7ex>[r] &
\L(2) \times (\L(m) \times \L(n)),
}$
\end{center}
hence $\L(m) \Lbox \L(n) \cong \L(m+n)$ as
$\L$-spaces.
\end{lemma}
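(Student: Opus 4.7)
The plan is to verify that the displayed diagram is a split coequalizer in $\U$; since split coequalizers are absolute (preserved by every functor), the isomorphism $\L(m) \Lbox \L(n) \cong \L(m+n)$ will follow, because by definition $\L(m) \Lbox \L(n)$ is the coequalizer $\L(2) \times_{\L(1)^2}(\L(m) \times \L(n))$.

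First I would make the two parallel maps explicit: one, call it $\rho_r$, uses the right $\L(1)^2$-action on $\L(2)$ arising from operadic composition in its two inputs, while the other, $\rho_\ell$, uses the left $\L(1)^2$-action on $\L(m) \times \L(n)$ arising from operadic composition in the outer slot of each factor. The candidate coequalizer map is the operadic composition $c(\phi, g, h) = \phi \circ (g \oplus h)$, using the canonical isometric isomorphism $(\R^\infty)^{m+n} \cong (\R^\infty)^m \oplus (\R^\infty)^n$. That $c \circ \rho_r = c \circ \rho_\ell$ is the associativity axiom of the operad $\L$; both sides send $(\phi, \alpha, \beta, g, h)$ to $\phi \circ ((\alpha g) \oplus (\beta h))$.

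To exhibit the splittings I would set $t(\phi, g, h) = (\phi, \id_{\R^\infty}, \id_{\R^\infty}, g, h)$, for which $\rho_r \circ t = \id$ holds on the nose, and define $s\colon \L(m+n) \to \L(2) \times \L(m) \times \L(n)$ by $s(\psi) = (\phi_\psi, \psi \iota_m, \psi \iota_n)$, where $\iota_m, \iota_n$ are the inclusions of the first $m$ and last $n$ coordinates of $(\R^\infty)^{m+n}$. The element $\phi_\psi \in \L(2)$ must satisfy $\phi_\psi(u,v) = u + v$ on the subspace $(\psi \iota_m)((\R^\infty)^m) \oplus (\psi \iota_n)((\R^\infty)^n) \subseteq (\R^\infty)^2$; this partial specification is well-defined and isometric because the two image subspaces are orthogonal in the target $\R^\infty$ (using that $\psi$ itself is isometric on the orthogonal sum $(\R^\infty)^m \oplus (\R^\infty)^n$). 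Granted the existence of $\phi_\psi$, the remaining split coequalizer equations $c \circ s = \id$ and $\rho_\ell \circ t = s \circ c$ are a direct calculation using the defining property of $\phi_\psi$.

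The main obstacle is to choose $\phi_\psi$ as a continuous function of $\psi$, since the subspace on which $\phi_\psi$ is pinned down varies with $\psi$. This is precisely the issue handled by EKMM~\cite[Lem.~I.5.4]{EKMM} in the spectral setting; I would adapt their argument, using the orthogonal projections onto $(\psi \iota_m)((\R^\infty)^m)$ and $(\psi \iota_n)((\R^\infty)^n)$ (which depend continuously on $\psi$) together with a once-and-for-all chosen isometric embedding $(\R^\infty)^2 \hookrightarrow \R^\infty$ to produce a continuous extension. The $\L(1)$-equivariance of the resulting isomorphism $\L(m) \Lbox \L(n) \xrightarrow{\cong} \L(m+n)$ is immediate, since the outer $\L(1)$-action on $\L(2)$ corresponds under $c$ precisely to the action of $\L(1)$ on $\L(m+n)$ by postcomposition.
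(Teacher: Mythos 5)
The paper itself offers no proof here --- it quotes the statement from EKMM, Lemma~I.5.4 --- so your attempt has to stand on its own. Your overall strategy (exhibit a split fork in $\U$ and use that split coequalizers are absolute, hence compute the balanced product $\L(m)\Lbox\L(n)$) is the right one, but both of your splitting maps fail. First, the section $t(\phi,g,h)=(\phi,\id,\id,g,h)$ cannot possibly work: it satisfies $\rho_r\circ t=\id$ \emph{and} $\rho_\ell\circ t=\id$, so the third split-fork identity $\rho_\ell\circ t=s\circ c$ would force $s\circ c=\id$ and hence that $c$ is injective; but $c$ genuinely identifies $(\phi\circ(a\oplus b),g,h)$ with $(\phi,ag,bh)$ for $a,b\in\L(1)$, so it is not. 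Second, and more seriously, the element $\phi_\psi$ need not exist at all. Take $\psi\in\L(m+n)$ to be a linear isometric \emph{isomorphism} $(\R^\infty)^{m+n}\cong\R^\infty$. Then $A:=\im(\psi\iota_m)$ and $B:=\im(\psi\iota_n)$ are orthogonal complements of one another in $\R^\infty$, and your prescription pins $\phi_\psi$ down on $A\oplus B\subseteq(\R^\infty)^2$ with image $A+B=\R^\infty$; any extension to a linear isometric embedding of all of $(\R^\infty)^2$ would have to carry the infinite-dimensional complement $A^\perp\oplus B^\perp$ isometrically into $(A+B)^\perp=0$. So the obstacle is not merely the continuity of the choice, as you suggest in your last paragraph, but its existence.

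Both defects are repaired by the trick that is in effect EKMM's argument: fix once and for all linear isometric isomorphisms $\alpha\colon\R^\infty\to(\R^\infty)^m$ and $\beta\colon\R^\infty\to(\R^\infty)^n$, and set $s(\psi)=\bigl(\psi\circ(\alpha\oplus\beta),\,\alpha^{-1},\,\beta^{-1}\bigr)$ and $t(\phi,g,h)=\bigl(\phi,\,g\alpha,\,h\beta,\,\alpha^{-1},\,\beta^{-1}\bigr)$, noting $g\alpha,h\beta\in\L(1)$ and $\alpha^{-1}\in\L(m)$, $\beta^{-1}\in\L(n)$. These maps are visibly continuous (no varying subspaces are involved), and one checks directly that $c\circ s=\id$, $\rho_\ell\circ t=\id$ and $\rho_r\circ t=s\circ c$; this is a split fork with the roles of the two parallel maps interchanged relative to your convention, which is equally good. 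Your closing remark on $\L(1)$-equivariance of the induced isomorphism then goes through unchanged, since $c$ itself is equivariant for the postcomposition actions and colimits in $\LU$ are created in $\U$.
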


The box product admits coherent associativity and commutativity isomorphisms
and a right adjoint $\Fbox{Y}{-}$ for the functor $- \Lbox Y \colon \LU \to \LU$, see
\cite[Sect.~4.1]{BCS:THH} and \cite[Def.~2.19]{boehme:masters}; cf.~also
\cite[Sect.~I.5]{EKMM}. We will give an explicit description of $\Fbox{Y}{-}$ in Lemma~\ref{F
explicitly} and record some of its properties in Proposition~\ref{prop properties Fbox}.
There is a natural transformation
\[ \lambda_{X,Y} \colon \hspace{1pc} X \Lbox Y = \L(2) \times_{\L(1)^2} (X
\times Y) \hspace{1pc} \to \hspace{1pc} X \times Y \hspace{3pc} \]
\[ \hspace{8.5pc} \left[ \psi_1 \oplus \psi_2, (x, \, y) \right] \hspace{1.5pc}
\mapsto \hspace{0.5pc} (\psi_1 \cdot x, \, \psi_2 \cdot y). \]
which restricts to a \emph{unital transformation}
\[ \lambda_{X} \colon \hspace{1pc} X \Lbox * = \L(2) \times_{\L(1)^2} (X \times
*) \hspace{1pc} \to \hspace{1pc} X \hspace{3pc} \]
\[ \hspace{7pc} \left[ \psi_1 \oplus \psi_2, (x, \, *) \right] \hspace{1pc}
\mapsto \hspace{1pc} \psi_1 \cdot x. \]
Here we used that each linear isometric embedding $\psi \colon \R^\infty
\bigoplus \R^\infty \to \R^\infty$ is given as $\psi_1 \oplus \psi_2$, where the
$\psi_i \in \L(1)$ have orthogonal images.
Unfortunately, $\lambda$ fails to be an isomorphism for all $\L$-spaces: For
instance, all linear maps in the image of $\lambda_{\L(1)}$ have an
infinite-dimensional orthogonal complement, hence it is not surjective.

However, $\lambda_X$ is always a weak equivalence of underlying spaces, see
\cite[Sect.~4.1]{BCS:THH}, and it satisfies an even stronger, equivariant
notion of equivalence, see Proposition \ref{prop:lambda}. In order to be able
to refer to this situation, we make the following definition.

\begin{definition} \label{def:weakSMC}
A relative category $(\mathcal{C}, \mathcal{W})$ is called a \emph{weak (closed)
symmetric monoidal category} if it is (closed) symmetric monoidal in the usual
sense except that the left and right unital transformations are only required
to lie in the class of weak equivalences $\mathcal{W}$, not necessarily in the
class of isomorphisms.
\end{definition}

\begin{rem}
Note that the usual definition of a monoid in a symmetric monoidal category in terms of two
commutative diagrams still makes sense in a weak symmetric monoidal category. By a slight abuse of
terminology, we will simply call such an object a ``monoid'' instead of a ``weak monoid''. Similarly, it makes sense to speak of monoidal functors between weak symmetric monoidal categories, and monoidal model
structures on weak symmetric monoidal categories.
\end{rem}

\begin{definition}
A $*$-\emph{module} is an $\L$-space $M$ such that $\lambda_M$ is an
isomorphism of $\L$-spaces.
\end{definition}

Surprisingly, the quotient $* \Lbox * = \L(2)/\L(1)^2$ is
trivial, as proven in \cite[Lemma~I.8.1]{EKMM}. Consequently, the functor $-
\Lbox *$ on $\L$-spaces takes values in $*$-modules, and the box product
restricts to a well-defined product on $\Mstar$, which we denote by the same
symbol $\Lbox$. So the category $\LU$ is a weak closed symmetric monoidal category, and then
$\Mstar$ is a symmetric monoidal category in the usual sense. The latter is
also closed, as follows formally from Proposition \ref{mirror_image} below.

Dually, we let $\Mcostar$ be the full subcategory of those $\L$-spaces such that
the adjoint $\bar{\lambda}_Y \colon Y \to \Fbox{*}{Y}$ is an isomorphism, and
refer to its objects as \emph{co-unital $\L$-spaces} or \emph{co-$*$-modules}.
The functor $\Fbox{*}{-}$ on $\LU$ takes values in $\Mcostar$.

The following collection of statements from \cite[Sect.~4.3]{BCS:THH} is an
easy exercise in elementary category theory. It is the unstable analogue of a
similar ``mirror image'' argument for $S$-modules,
cf.~\cite[Sect.~II.2]{EKMM}.

\begin{prop} \label{mirror_image}
The categories $\Mstar$ and $\Mcostar$ of unital and co-unital $\L$-spaces,
respectively, are ``mirror image subcategories'' in the following sense:
\begin{enumerate}[a)]
    \item All pairs of functors in the diagram below form adjunctions (where
    upper arrows and arrows on the left hand side are left adjoints).
    \begin{center}
	$\xymatrix@M=12pt@C=10pc@R=6pc{
	\LU \ar@<0.7ex>[r]^{- \Lbox *} \ar@{->}@<0.7ex>[dr]^{- \Lbox *}
	\ar@<-0.7ex>[d]_{\Fbox{*}{-}} &
	\Mstar \ar@<0.7ex>[l]^{\Fbox{*}{-}} \ar@{_{(}->}@<-0.7ex>[d] \\
	\Mcostar \ar@{_{(}->}@<-0.7ex>[u] \ar@<0.7ex>[r]^{- \Lbox *} &
	\LU \ar@<-0.7ex>[u]_{- \Lbox *} \ar@{->}@<0.7ex>[ul]^{\Fbox{*}{-}}
	\ar@<0.7ex>[l]^{\Fbox{*}{-}}
	}$
	\end{center}
	\item The subdiagrams of left-adjoint (respectively right-adjoint) functors
    commute up to natural equivalence.
    \item The categories $\Mstar$ and $\Mcostar$ are bicomplete. Colimits in
    $\Mstar$ are created in $\LU$, limits are obtained by applying $- \Lbox *$
    to limits in $\LU$; dually for $\Mcostar$.
    \item The diagonal adjunction (co-)restricts to an equivalence of categories
    \begin{center}
	$\xymatrix@M=10pt@!=6pc{
	\Mcostar \ar@<0.7ex>[r]^{- \Lbox *} &
	\Mstar. \ar@<0.7ex>[l]^{\Fbox{*}{-}}
	}$
	\end{center}
\end{enumerate}
\end{prop}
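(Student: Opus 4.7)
The plan is to reduce everything to the core adjunction $-\Lbox * \dashv \Fbox{*}{-}$ on $\LU$ from Subsection~\ref{subsect:LU_Mstar} together with the cited triviality $* \Lbox * \cong *$. The latter forces $\lambda_{X\Lbox *}$ to be an isomorphism for every $X \in \LU$, so $-\Lbox *$ corestricts to a functor $\LU \to \Mstar$; dually, $\Fbox{*}{-}$ corestricts to $\LU \to \Mcostar$. These corestrictions, combined with the fully faithful inclusions $\Mstar, \Mcostar \hookrightarrow \LU$, supply the data required for all four parts.

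For part~(a) I would establish the four outer adjunctions of the square separately. The top-horizontal and left-vertical ones are literal (co)restrictions of the core adjunction along a full inclusion, hence purely formal. The other two, which exhibit the inclusion $\Mstar \hookrightarrow \LU$ as a left adjoint with right adjoint $-\Lbox *$ (and dually for $\Mcostar$), require producing a hom-set bijection by hand: given $M \in \Mstar$ and $X \in \LU$, the map $\LU(M,X) \to \LU(M, X \Lbox *)$ sending $g$ to $(g \Lbox *) \circ \lambda_M^{-1}$ is a bijection with inverse given by post-composition with $\lambda_X$, as one checks using naturality of $\lambda$ and the defining iso $\lambda_M$. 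The dual construction uses $\bar\lambda$, and the two diagonal adjunctions are composites.

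Parts~(b) and~(c) are formal consequences of~(a). For (b), each of the two left-adjoint composites $\LU \to \LU$ around the square equals $-\Lbox *$ up to an extra application of the same functor, which is absorbed by $\lambda$ since the intermediate object lies in $\Mstar$; the right-adjoint case is dual via $\bar\lambda$. For (c), the (co)reflective structure from~(a) makes bicompleteness automatic and the claimed descriptions follow immediately: colimits in $\Mstar$ are preserved by the left-adjoint inclusion (hence created in $\LU$), while limits are obtained by applying the right adjoint $-\Lbox *$ to limits in $\LU$; the $\Mcostar$ statements are dual.

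The only genuinely content-bearing step is part~(d). I would show that the unit and counit of the diagonal adjunction between $\Mcostar$ and $\Mstar$ are natural isomorphisms on their respective subcategories, deducing invertibility at $M \in \Mstar$ (respectively $Y \in \Mcostar$) from the defining iso $\lambda_M$ (respectively $\bar\lambda_Y$) via the triangle identities. The main obstacle I foresee is bookkeeping: four distinct adjunctions coexist with the transformations $\lambda$ and $\bar\lambda$, and the cancellations that yield the equivalence must be tracked carefully. This is precisely the \emph{mirror image} argument for $S$-modules carried out in \cite[Sect.~II.2]{EKMM}, transposed to the unstable setting --- a routine but non-trivial piece of elementary category theory, as the author himself notes.
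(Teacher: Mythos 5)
The paper gives no proof of this proposition at all: it is presented as ``an easy exercise in elementary category theory,'' quoted from \cite[Sect.~4.3]{BCS:THH} and modelled on the mirror-image argument of \cite[Sect.~II.2]{EKMM}. Your reconstruction follows exactly the argument of those references --- two of the four adjunctions are (co)restrictions of the core adjunction $-\Lbox{} * \dashv \Fbox{*}{-}$ along the full inclusions, the other two exhibit $\Mstar$ as coreflective and $\Mcostar$ as reflective in $\LU$, and (b)--(d) then follow formally --- so there is nothing in the paper to diverge from. One step is under-justified, though: in verifying that $g \mapsto (g \Lbox *)\circ\lambda_M^{-1}$ and $h \mapsto \lambda_X\circ h$ are mutually inverse, one composite reduces via naturality to $(\lambda_X \Lbox *)\circ\lambda_{X\Lbox *}^{-1}\circ h$, so you need the identity $\lambda_X \Lbox * = \lambda_{X\Lbox *}$ (equivalently, the second triangle identity for the coreflection, with unit $\lambda_M^{-1}$ and counit $\lambda_X$). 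This does \emph{not} follow from ``naturality of $\lambda$ and the defining iso $\lambda_M$'': naturality only yields the identity after postcomposition with $\lambda_X$, which is not monic. It is a genuine point-set fact about the balanced product $\L(2)\times_{\L(1)^2}(-)$, proved from Hopkins' Lemma \ref{Hopkins} and the explicit formula for $\lambda$; this is precisely the coherence content of \cite[Sect.~I.8]{EKMM} that the citation is carrying. The same coherence input is what makes your sketch of part (d) (unit and counit of the diagonal adjunction restricted to the subcategories) go through. With that supplied, the proposal is correct and is the intended argument.
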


\subsection{Completely universal subgroups}
In this short section, we recall that every compact Lie group is isomorphic to
an actual subgroup of $\L(1)$, a so-called \emph{completely universal subgroup}. Thus, all
compact Lie groups act simultaneously on each $\L$-space $X$; these actions are
compatible in the sense that they all are restrictions of the same action of
$\L(1)$ on $X$.

\begin{definition}
Let $U_G$ be an orthogonal $G$-representation of countable dimension. We say
that $U_G$ is 
\begin{enumerate}[i)]
  \item a \emph{$G$-universe} if it contains a 1-dimensional trivial subrepresentation and has the property that for each
  finite-dimensional $G$-representation $V$ that embeds into $U_G$, the representation $\bigoplus_\N V$ also embeds into $U_G$,
  \item a \emph{complete $G$-universe} if it is a $G$-universe that contains one
  copy, and hence countably many copies of each irreducible $G$-representation.
\end{enumerate}
\end{definition}

\begin{definition}[\cite{schwede:OOUv1}, Def.~1.4]
A compact subgroup $G \leq \L(1)$ is called \emph{completely universal} if it admits the structure of a
compact Lie group (necessarily unique, see \cite[Prop.~3.12]{broeckertomdieck})
such that under the tautological action, $\R^\infty$ becomes a complete $G$-universe.
\end{definition}

\begin{rem}
The completely universal subgroups are just called ``universal subgroups'' in \cite{schwede:OOUv1}, but we will use the more precise
terminology.
\end{rem}


\begin{lemma}[cf.~\cite{schwede:OOUv1}, Prop.~1.5]
The equivalence classes of completely universal subgroups of $\L(1)$ under
conjugation by invertible elements of $\L(1)$ biject with the isomorphism
classes of compact Lie groups.
\end{lemma}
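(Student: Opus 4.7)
The plan is to exhibit the claimed bijection explicitly and then verify that both the assignment and its inverse are well-defined. The forward map sends the $\L(1)$-conjugacy class of a completely universal subgroup $G \leq \L(1)$ to the isomorphism class of $G$ as a compact Lie group. Conjugation by any invertible $\psi \in \L(1)$ is a topological automorphism of $\L(1)$ (as composition in $\L(1)$ is continuous), so it carries $G$ to a subgroup $\psi G \psi^{-1}$ that is topologically isomorphic to $G$; by the uniqueness of the compact Lie group structure on a universal subgroup, this topological isomorphism is automatically a Lie group isomorphism, and the tautological action on $\R^\infty$ remains a complete universe, so the forward map is well-defined on conjugacy classes.

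For surjectivity, given an arbitrary compact Lie group $G$, I would build a complete $G$-universe $U_G := \bigoplus_{[V]} V^{\oplus \N}$, where $[V]$ ranges over the countably many isomorphism classes of irreducible orthogonal $G$-representations. This is a countable-dimensional real inner product space and, by construction, a complete $G$-universe on which $G$ acts faithfully. Choosing any linear isometric isomorphism $U_G \cong \R^\infty$ transports the action to a continuous monomorphism $G \hookrightarrow \O(\R^\infty) \subseteq \L(1)$. Since $G$ is compact and $\L(1)$ is Hausdorff, this is a topological embedding whose image is a compact subgroup; under the tautological action, $\R^\infty$ carries the prescribed complete $G$-universe structure, so the image is completely universal.

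For injectivity, suppose $G, H \leq \L(1)$ are completely universal and $\phi \colon G \xrightarrow{\cong} H$ is an isomorphism of compact Lie groups. View $\R^\infty$ as a $G$-representation in two ways: via the tautological $G$-action (giving $\R^\infty_G$) and via the restriction along $\phi$ of the tautological $H$-action (giving $\phi^{*}\R^\infty_H$). Both are complete $G$-universes, so by the uniqueness of complete $G$-universes up to $G$-equivariant isometric isomorphism there exists an isomorphism $\psi \colon \R^\infty_G \xrightarrow{\cong} \phi^{*}\R^\infty_H$. Then $\psi$ is an invertible element of $\L(1)$ satisfying $\psi g \psi^{-1} = \phi(g)$ for every $g \in G$, hence $\psi G \psi^{-1} = H$, showing that $G$ and $H$ are $\L(1)$-conjugate.

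The main obstacle will be the uniqueness of complete $G$-universes up to $G$-equivariant isometric \emph{isomorphism} (as opposed to mere embedding), since $\L(1)$ consists of linear isometric embeddings that are generally not surjective. I would handle this by matching isotypical decompositions componentwise: on either side, each isotypical component contains countably infinitely many copies of the corresponding irreducible, so any bijection between the indexing sets lifts to a $G$-equivariant isometric isomorphism on that component, and the direct sum of these is the required $\psi \in \L(1)$.
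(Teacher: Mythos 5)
Your proof is correct, and since the paper does not prove this lemma itself but imports it from Schwede's Prop.~2.11, your argument is essentially a reconstruction of the standard proof from the cited source: realize an abstract compact Lie group via a faithful complete universe $\bigoplus_{[V]} V^{\oplus\N}$ to get surjectivity, and use uniqueness of complete $G$-universes up to equivariant isometric isomorphism (via matching isotypical components, each being a countably infinite multiple of its irreducible) to turn an abstract isomorphism into a conjugating invertible element of $\L(1)$. Nothing further to compare; the one point you rightly isolate as the real content --- that the comparison of universes can be promoted from an embedding to an isomorphism --- is handled correctly by your componentwise argument.
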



In Section \ref{sect:MS}, we will introduce various notions of equivalences
and fibrations detected on $G$-fixed points for all completely universal
subgroups $G \leq \L(1)$.

\subsection{Orthogonal spaces} \label{subsect:IU}
Write $\I$ for the category of finite-dimensional real inner product spaces with
morphisms the linear isometric embeddings. It is enriched over spaces, see
the beginning of Subsection \ref{subsect:LU_Mstar}.

\begin{definition}
An \emph{orthogonal space} is a continuous functor $Y \colon \I \to \U$. We
write $\IU$ for the category of orthogonal spaces and natural transformations.
\end{definition}

The category $\IU$ is bicomplete, with (co-)limits taken objectwise.
Moreover, it is tensored and co-tensored over $\U$ where, for $Y \in \IU, \, A
\in \U$, the tensor orthogonal space $Y \times A$ sends $V \in I$ to $(Y \times A)(V) :=
Y(V) \times A$. Equivalently, we can regard $A$ as the constant orthogonal space
with value $A$ and form the product in $\IU$.

The category of orthogonal spaces is a closed symmetric
monoidal category under the \emph{box product}, which is the Day convolution
product with respect to direct sum of vector spaces in $\I$ and the product in
$\U$, see \cite[Sect.~1.3, App.~C]{schwede:GHTv1} for further details.
A unit is given by the constant one-point orthogonal space $\one$.

Following Schwede, we take \emph{global equivalences} of orthogonal spaces to be
those morphisms that, for each compact Lie group $G$, induce $G$-weak equivalences on
homotopy colimits along $G$-representations. The precise definition is given
in more elementary terms, cf.~\cite[Rem.~1.1.4]{schwede:GHTv1}.

\begin{definition}[\cite{schwede:OOUv1}, Def.~3.4]
A morphism $f \colon X \to Y$ of orthogonal spaces is a \emph{global
equivalence} if for any compact Lie group $G$, any orthogonal
$G$-representation $V$ of finite dimension, any $k \geq 0$ and any
commuting square
\begin{center}
$ \xymatrix@M=8pt{
	S^{k-1} \ar[r]^(.45)\alpha \ar[d]_{\mathrm{incl}} &
	X(V)^G \ar[d]^{f(V)^G} \\
	D^k \ar[r]_(.45)\beta &
	Y(V)^G
}$
\end{center}
there is a \fd $G$-representation $W$, a $G$-equivariant linear isometric
embedding $\varphi \colon V \to W$ and a map $\lambda \colon D^k \to X(W)^G$
such that in the extended diagram
\begin{center}
$ \xymatrix@M=8pt@C=3pc{
	S^{k-1} \ar[r]^(.45)\alpha \ar[d]_{\mathrm{incl}} &
	X(V)^G \ar[r]^{X(\varphi)^G} &
	X(W)^G \ar[d]^{f(W)^G} \\
	D^k \ar[r]_(.45)\beta \ar@{-->}[urr]^(.45)\lambda &
	Y(V)^G \ar[r]_{Y(\varphi)^G} &
	Y(W)^G
}$ \end{center}
the upper triangle commutes strictly and the lower triangle commutes up to
homotopy relative to $S^{k-1}$.
\end{definition}

\begin{thm}[Global model structures for orthogonal spaces,
cf.~\cite{schwede:GHTv1}, Thm.~1.2.21, Prop.~1.2.23]
\label{thm_MS_IU}
The global equivalences are part of two proper, topological, cofibrantly
generated model structures on the category of orthogonal spaces, the
\emph{(absolute) global model structure} $(\IU)_\mathrm{abs}$ and the
\emph{positive global model structure} $(\IU)_\mathrm{pos}$.
\end{thm}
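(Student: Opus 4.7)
My plan is to invoke a recognition theorem for cofibrantly generated model categories (e.g. Kan's criterion) by producing explicit generating sets $I$ and $J$ built from \emph{semi-free} orthogonal spaces. For a compact Lie group $G$ and a finite-dimensional orthogonal $G$-representation $V$, let $L_{G,V} \in \IU$ be defined by $L_{G,V}(W) = \LIE(V,W)/G$; the Yoneda-style adjunction $\IU(L_{G,V}, X) \cong X(V)^G$ shows that these objects corepresent precisely the fixed-point functors appearing in the definition of global equivalence, so they are the natural cells to work with. Since every compact Lie group is isomorphic to a universal subgroup of $\L(1)$, one may choose a set of representatives and therefore a set of indexing pairs $(G, V)$.

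I take the generating cofibrations $I$ to be the maps $L_{G,V} \times (S^{k-1} \hookrightarrow D^k)$ ranging over $k \geq 0$, representatives $G$, and $V$ from a small skeleton of \fd $G$-representations; for the positive variant one additionally demands $V^G \neq 0$ (equivalently, $V \neq 0$ on trivial-action components). The generating acyclic cofibrations $J$ consist of two families. The first is the pushout-product of each map in $I$ with the endpoint inclusion $\{0\} \hookrightarrow [0,1]$, which handles underlying homotopy equivalences objectwise. The second family directly encodes the stabilisation step in the definition of global equivalence: for every \Glie $\varphi \colon V \hookrightarrow W$ of finite-dimensional representations, the induced map $L_{G,W} \to L_{G,V}$ is a global equivalence; replacing it by the front inclusion into its mapping cylinder yields an $h$-cofibration which is a global equivalence, and smashing with $S^{k-1} \hookrightarrow D^k$ via pushout-product produces the desired generators.

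The main obstacle is the acyclicity half of the recognition criterion: every relative $J$-cell complex must be a global equivalence. This splits into showing (i) that the mapping-cylinder maps above are themselves global equivalences, which amounts to unpacking a $(G,V)$-lifting square and producing the requisite $\varphi$ and homotopy directly from the construction; and (ii) that global equivalences are closed under retracts, coproducts, pushouts along $h$-cofibrations, and transfinite composition, for which the key input is that $(-)^G$ commutes with filtered colimits along closed inclusions, so the lifting data can be transported along the cell attachment. The dual half, that $I$-injectives coincide with $J$-injectives that are global equivalences, is formal from the corepresenting adjunction for $L_{G,V}$ together with Quillen's recognition for $\U$ applied fixed-pointwise. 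Smallness is automatic since spaces are small relative to closed $T_1$-inclusions and the indexing data forms a set. Left properness follows because $I$-cells are $h$-cofibrations and global equivalences are preserved by pushouts along $h$-cofibrations; right properness reduces to the classical fact for topological spaces applied at each $G$-fixed point. The topological enrichment amounts to a pushout-product compatibility between $I, J$ and the generating (acyclic) cofibrations of $\U$, which is immediate from the definitions. The positive variant is obtained by repeating the entire argument with the constraint $V^G \neq 0$ imposed throughout, verifying along the way that this restriction still detects all global equivalences.
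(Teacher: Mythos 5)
First, a point of calibration: the paper does not prove this theorem at all --- it is imported from Schwede's book project (Thm.~I.4.10 and Prop.~II.3.3 of the cited reference), and the author explicitly omits even the description of the (co)fibrations. Measured against Schwede's actual construction, your choice of cells is the right one: the semifree orthogonal spaces $L_{G,V}=\LIE(V,-)/G$ with $\IU(L_{G,V},X)\cong X(V)^G$, generating cofibrations $L_{G,V}\times(S^{k-1}\to D^k)$, and generating acyclic cofibrations given by a level family together with mapping-cylinder replacements of the restriction maps $L_{G,W}\to L_{G,V}$ pushout-producted with sphere inclusions are exactly Schwede's generators, and your list of closure properties of global equivalences (coproducts, pushouts along $h$-cofibrations, transfinite composition, using that $(-)^G$ commutes with filtered colimits along closed embeddings) is the correct supporting material.

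Two things do not go through as written. The smaller one: Schwede's positive global model structure is indexed by pairs $(G,V)$ with $V\neq 0$ (equivalently, positive cofibrations are the flat cofibrations that are homeomorphisms at $V=0$); your condition $V^G\neq 0$ is strictly stronger, and your parenthetical claim that the two are equivalent is false --- take $V$ nonzero with no trivial summand. The condition $V^G \neq 0$ belongs to the Mandell--May positive structure for a fixed group, not to the global setting, where the only point of positivity is to free up the value at $V=0$ (cf.\ Remark \ref{rem_positive}). The larger one: the step you dismiss as ``formal from the corepresenting adjunction'' --- that a $J$-injective map which is a global equivalence is $I$-injective --- is the technical heart of the entire localization and is not formal. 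The lifting property against your second family of generators translates into the statement that each square comparing $X(V)^G\to X(W)^G$ with $Y(V)^G\to Y(W)^G$ is a strict homotopy pullback; one must then prove that a level fibration with this property which is in addition a global equivalence is a \emph{level} acyclic fibration. Schwede's argument identifies the fibrant objects as the static (``closed'') orthogonal spaces and compares fixed points at a single $V$ with the (homotopy) colimit over the poset of finite-dimensional subrepresentations of a complete $G$-universe; nothing about the adjunction $\IU(L_{G,V},X)\cong X(V)^G$ alone, nor Quillen's recognition applied fixed-pointwise, produces this. Without it the factorization and lifting axioms for the localized structure are not established, so this step needs to be supplied rather than asserted.
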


We omit the description of the (co-)fibrations in the two global model structures
since we will not make use of them explicitly. As usual, the positive variant
has a better behaviour \wrt commutative monoids, and for different reasons, it
is also necessary for us to work with the positive global model structure
throughout the paper, see Remark \ref{rem_positive}. The absolute model
structure will only appear in Section \ref{sect:monoids}.

Note that both global model structures are monoidal \wrt the box product of orthogonal spaces: The
pushout product axiom is proven in \cite[Prop.~1.4.12 iii), iv)]{schwede:GHTv1}, while the unit axiom follows from
\cite[Thm.~1.3.2 ii)]{schwede:GHTv1}.

The categories $\IU$ and $\LU$ can be connected by an adjoint pair of
functors. By general theory, the right exact enriched functors (i.e., those
which preserve colimits and tensors) $\mathcal{DU} \to \mathcal{C}$ out of
a category of diagram spaces into a category $\mathcal{C}$ that is enriched and cocomplete in the enriched sense agree with
the enriched functors $\mathcal{D}^{op} \to \mathcal{C}$ up to isomorphism of
categories; see \cite[Sect.~I.2]{mandellmay:eqvar_orth_spectra} and note that the results also
apply in the unbased case. \\
Lind \cite[Def.~8.2]{lind:diagram_spaces} defines a functor $\qu^*
\colon \mathcal{I}^{op} \to \LU$ that sends $V$ to $\L(V \otimes \R^\infty,
\R^\infty)$; it is strong symmetric monoidal by Lemma \ref{Hopkins}.
The results of \cite{mandellmay:eqvar_orth_spectra} then yield an adjunction
\begin{center}
	$\xymatrix@M=10pt@!=6pc{
	\IU \ar@<0.7ex>[r]^{\qu} &
	\LU. \ar@<0.7ex>[l]^{\qu^\#}
	}$
\end{center}
where the left adjoint $\qu$ is given as an enriched coend $\qu^* \otimes_\I
(-)$ and the right adjoint is $\qu^\# X(V) = \LU(\qu^*(V), X)$. The first is
strong, the latter lax symmetric monoidal.

The functor $\qu^* \colon \I^{op} \to \LU$ can be replaced by
$\qu^*_* \colon V \mapsto \L(V \otimes \R^\infty, \R^\infty) \Lbox *$. Then $\qu^*_*$ takes
values in $*$-modules and yields an adjunction
\begin{center}
	$\xymatrix@M=10pt@!=6pc{
	\IU \ar@<0.7ex>[r]^{\qu_*} &
	\Mstar. \ar@<0.7ex>[l]^{\qu_*^\#}
	}$
\end{center} 
defined in the same way as before. Again, the left adjoint is strong, the right
adjoint lax symmetric monoidal. By \cite[Lemma~8.6]{lind:diagram_spaces}, this
pair of functors agrees, up to natural equivalence, with the
composition of adjunctions
\begin{center}
$\xymatrix@M=10pt@C=4pc{
\IU \ar@<0.7ex>[r]^(.5){\qu} & \LU
\ar@<0.7ex>[l]^(.5){\qu^\#} \ar@<0.7ex>[r]^(.5){- \Lbox *} &
\Mstar \ar@<0.7ex>[l]^(.48){\Fbox{*}{-}}.
}$ 
\end{center}

\begin{rem} \label{functor_O}
There is another interesting choice of a functor $\IU \to \LU$.
For an orthogonal space $Y$, the colimit $Y(\R^\infty) := \colim_V Y(V)$ taken
over all finite-dimensional inner product subspaces $V \subseteq \R^\infty$
(or equivalently, all standard Euclidean spaces $\R^n$) has a canonical $\L$-space
structure, see \cite[Constr.~3.2]{schwede:OOUv1}.
The resulting functor $\obb \colon \IU \to \LU$ is induced by $\obb^* \colon \I^{op} \to \LU$
sending $V \in \I$ to $\LIE(V, \R^\infty)$, see \cite[Lemma~9.6]{lind:diagram_spaces}.
In unpublished work, Schwede proved that $\obb$ is strong symmetric monoidal.
It follows formally that its right
adjoint is a lax symmetric monoidal functor. \\
Any choice of a one-dimensional subspace of $\R^\infty$ defines a linear
isometric embedding $V \to V \otimes \R^\infty$, hence a natural transformation
$\xi^* \colon \qu^*(V) = \L(V \otimes \R^\infty, \R^\infty) \to \L(V, \R^\infty)
\cong \obb^*(V)$ which in turn determines a natural map $\xi = \xi^*
\otimes_I (-) \colon \qu \to \obb$. The latter is symmetric monoidal; moreover,
it is a global equivalence on cofibrant objects in the absolute model structure on
orthogonal spaces, see 
\cite[Prop.~3.7]{schwede:OOUv1}.
A precursor of the last statement was \cite[Lemma~9.7]{lind:diagram_spaces}.
\end{rem}

\section{Global model structures for $\LU$ and $\Mstar$}
\label{sect:MS}

We recall Schwede's model structures for $\L$-spaces from
\cite{schwede:OOUv1} and derive our first main result, Theorem~\ref{thm_B_strict}. It establishes
the global model structure for $*$-modules which is Quillen equivalent to orthogonal spaces
via the functor $\qu_*$.

The following notions of equivalences and fibrations of $\L$-spaces will also be
used for maps of $*$-modules by viewing them as maps in $\LU$.

\begin{definition}[\cite{schwede:OOUv1}, Def.~1.6, Def.~1.8] \label{def:equivalences}
Let $\CL$ denote the set of all compact Lie subgroups of $\L(1)$.
A map $f \colon X \to Y$ of $\L$-spaces is called
\begin{itemize}
  \item a $\CL$-\emph{equivalence} (respectively $\CL$-\emph{fibration}) if the map $f^G \colon X^G \to Y^G$ is a weak homotopy
  equivalence (respectively Serre fibration) of spaces for all compact Lie subgroups $G \leq \L(1)$;
  \item a \emph{global equivalence} if $f^G \colon X^G \to Y^G$ is a weak
  homotopy equivalence of spaces for all completely universal subgroups $G \leq
  \L(1)$;
  \item a \emph{strong global equivalence} if the map $f$, considered as a map of $G$-spaces, is a $G$-equivariant
  homotopy equivalence for all completely universal subgroups $G \leq \L(1)$.
\end{itemize}
\end{definition}

\begin{prop} \label{prop:lambda}
The natural map of $\L$-spaces $\lambda_{X,Y} \colon X \Lbox Y \to X \times Y$
is a strong global equivalence for all $X, Y \in \LU$. Consequently, so is the
adjoint map $\bar{\lambda}_X \colon X \to \Fbox{*}{X}$. Both functors $(-)
\Lbox *$ and $\Fbox{*}{-}$ preserve and reflect (strong) global equivalences. For
all $Z \in \LU$, the functor $(-) \Lbox Z$ preserves (strong) global
equivalences.
\end{prop}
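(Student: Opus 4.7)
The plan is to establish the first assertion---that $\lambda_{X,Y}$ is a strong global equivalence---as the main geometric step, and deduce all remaining statements from it by Proposition~\ref{mirror_image} and two-out-of-three for $G$-homotopy equivalences.

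For the main claim, I would fix a completely universal subgroup $G \leq \L(1)$. Because $\R^\infty$ is a complete $G$-universe by definition, the spaces $\LIE^G(\R^\infty, \R^\infty)$ and $\LIE^G(\R^\infty \oplus \R^\infty, \R^\infty)$ of $G$-equivariant linear isometric embeddings (with $G$ acting diagonally on the source in the second case) are nonempty and contractible. Pick $\alpha = \alpha_1 \oplus \alpha_2$ in $\LIE^G(\R^\infty \oplus \R^\infty, \R^\infty)$ and define $s \colon X \times Y \to X \Lbox Y$ by $s(x,y) = [\alpha, (x,y)]$. The balanced-product relation $[\alpha \circ (g,g), (x,y)] = [\alpha, (gx, gy)]$, together with the $G$-equivariance identity $g \circ \alpha = \alpha \circ (g,g)$, makes $s$ a $G$-equivariant map. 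The composite $\lambda \circ s$ is the product action of $(\alpha_1, \alpha_2)$, so a $G$-equivariant path in $\LIE^G(\R^\infty, \R^\infty)$ from $\id$ to each $\alpha_i$ yields a $G$-equivariant homotopy $\lambda \circ s \simeq_G \id_{X \times Y}$. For the opposite direction, $s \circ \lambda [\phi, (x,y)] = [\alpha \circ (\phi_1, \phi_2), (x,y)]$ after moving $(\phi_1, \phi_2)$ across the balanced-product relation; I would use a $G$-equivariant contraction of $\LIE^G(\R^\infty \oplus \R^\infty, \R^\infty)$ to produce, continuously and $G$-consistently in $\phi$, a path from $\phi$ to $\alpha \circ (\phi_1, \phi_2)$ in $\L(2)$. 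Feeding this into the balanced product yields a $G$-equivariant homotopy $s \circ \lambda \simeq_G \id_{X \Lbox Y}$.

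Everything else follows formally. The adjoint $\bar{\lambda}_X$ is a strong global equivalence by Proposition~\ref{mirror_image}(d), which identifies $\Mstar$ and $\Mcostar$ via $- \Lbox *$ and $\Fbox{*}{-}$, so $\bar{\lambda}_X$ and $\lambda_{X \Lbox *}$ correspond under this equivalence. For preservation under $- \Lbox Z$, use the natural square
\begin{center}
$\xymatrix@M=8pt{
X \Lbox Z \ar[r]^{\lambda_{X,Z}} \ar[d]_{f \Lbox Z} & X \times Z \ar[d]^{f \times Z} \\
X' \Lbox Z \ar[r]^{\lambda_{X',Z}} & X' \times Z
}$
\end{center}
whose horizontals are strong global equivalences by the main claim. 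If $f$ is a strong global equivalence, so is $f \times Z$ (products of $G$-homotopy equivalences are such), and then two-out-of-three applied to $\lambda_{X',Z} \circ (f \Lbox Z) = (f \times Z) \circ \lambda_{X,Z}$ forces $f \Lbox Z$ to be a strong global equivalence. The same argument with $\bar{\lambda}$ in place of $\lambda$ handles $\Fbox{*}{-}$; the reflection statements for $- \Lbox *$ and $\Fbox{*}{-}$ come from the same squares at $Z = *$ by applying two-out-of-three in the opposite order. Exactly the same reasoning applies to the weak notion of global equivalence.

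The main obstacle I anticipate is the construction of the $G$-equivariant homotopy $s \circ \lambda \simeq_G \id$ on $X \Lbox Y$: while $G$-contractibility of $\LIE^G(\R^\infty \oplus \R^\infty, \R^\infty)$ supplies the raw material, turning a contraction into a homotopy through the balanced product $\L(2) \times_{\L(1)^2} (X \times Y)$ requires tracking which $\L(1)$-action on $\L(2)$---conjugation (governing $G$-equivariance of embeddings) versus post-composition (determining the $\L$-space structure on $X \Lbox Y$)---is responsible for equivariance at each step, and ensuring that the deformation is compatible with the $\L(1)^2$-identifications defining the balanced product.
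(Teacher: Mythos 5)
The formal half of your argument---deducing the statements about $\bar{\lambda}_X$, about $(-) \Lbox *$ and $\Fbox{*}{-}$, and about $(-) \Lbox Z$ from the first claim via naturality squares and two-out-of-three---is essentially what the paper does. One small imprecision: Proposition~\ref{mirror_image}(d) is an equivalence between $\Mcostar$ and $\Mstar$, while $\bar{\lambda}_X \colon X \to \Fbox{*}{X}$ is defined for an arbitrary $\L$-space $X$, so $\bar{\lambda}_X$ and $\lambda_{X \Lbox *}$ do not literally ``correspond under the equivalence''; the clean route (taken in the paper) is the naturality square $\lambda_{\Fbox{*}{X}} \circ (\bar{\lambda}_X \Lbox *) = \bar{\lambda}_X \circ \lambda_X$ together with the fact that $\bar{\lambda}_X \Lbox *$ is an isomorphism, and then two-out-of-three. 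The substantive difference is the first claim itself: the paper does not prove that $\lambda_{X,Y}$ is a strong global equivalence but cites \cite[Thm.~4.2]{schwede:OOU090316}, whereas you attempt a direct proof. Your construction of the section $s$ is fine: a $G$-equivariant $\alpha \colon \R^\infty_G \oplus \R^\infty_G \to \R^\infty_G$ exists because $\R^\infty_G$ is a complete universe, its components $\alpha_i$ are then themselves $G$-equivariant, and the homotopy $\lambda \circ s \simeq_G \id$ only needs path-connectedness of $\L(1)^G = \LIE^G(\R^\infty_G, \R^\infty_G)$.

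The gap is exactly where you locate it, and it is not cosmetic. Contractibility of $\LIE^G(\R^\infty \oplus \R^\infty, \R^\infty)$ is a statement about the $G$-fixed points of $\L(2)$; it cannot by itself move an arbitrary, non-fixed $\varphi \in \L(2)$ to $\alpha \circ (\varphi_1 \oplus \varphi_2)$, nor does it say anything about compatibility with the identifications defining the balanced product. What $s \circ \lambda \simeq_G \id$ actually requires is a homotopy $h_t \colon \L(2) \to \L(2)$ from the identity to $\varphi \mapsto \alpha \circ (\varphi_1 \oplus \varphi_2)$ that is simultaneously equivariant for precomposition by $\L(1) \times \L(1)$ (so that it descends to $\L(2) \times_{\L(1)^2} (X \times Y)$) and for postcomposition by $G$ (so that the induced homotopy on $X \Lbox Y$ is $G$-equivariant). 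Such a homotopy exists, but it is built from the explicit orthogonal-rotation paths between isometries with orthogonal images, as in \cite{EKMM} and in Schwede's proof of the cited theorem, and the required bi-equivariance must be verified from those formulas; it does not follow from abstract contractibility of any single mapping space. As written, your proof of the first claim is therefore incomplete; if you instead quote \cite[Thm.~4.2]{schwede:OOU090316} for it, the remainder of your argument goes through and agrees with the paper's.
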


\begin{proof}
The first part is \cite[Thm.~1.21]{schwede:OOUv1}. In combination with the
2-out-of-3 property and the following diagram, it implies the second statement;
the third then follows immediately.
\begin{center}
$ \xymatrix@M=10pt@C=4pc{
	X \Lbox * \ar[r]^{\lambda_X}_{\sim} \ar[d]^{\cong}_{\bar{\lambda}_X \Lbox *} &
	X \ar[d]^{\bar{\lambda}_X} \\
	\Fbox{*}{X} \Lbox * \ar[r]^(.55){\sim}_(.55){\lambda_{\Fbox{*}{X}}} &
	\Fbox{*}{X}
}$
\end{center} 
Now let $Z \in \LU$ be arbitrary. If $f$ is a (strong) global equivalence, then
so is $f \times Z$, hence also $f \Lbox Z$.
\end{proof}


Recall that for any $G \leq \L(1)$, the $\L$-space $\L(1)/G$ represents the
fixed point functor $(-)^G \colon \LU \to \U$.
The collection of fixed point functors associated to $G \in \CL$ gives rise to the following model structure on $\L$-spaces.  

\begin{prop}[$\CL$-projective model structure for $\L$-spaces, 
\cite{schwede:OOUv1}, Prop.~1.11] \label{prop_univMS_LU}
There is a proper topological model structure $(\LU)_{\CL}$ on the
category of $\L$-spaces with weak equivalences and fibrations the $\CL$-equivalences
and $\CL$-fibrations. It is cofibrantly generated with sets of
generating (acyclic) cofibrations
obtained by tensoring the standard generating (acyclic) cofibrations
for spaces $S^{n-1} \to D^n$ (respectively $D^n \times 0 \to D^n \times I$) with
$\L$-spaces of the form $\L(1)/G$,
where $G$ runs through all compact Lie subgroups of $\L(1)$.
\end{prop}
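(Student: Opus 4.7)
The plan is to apply the standard cofibrantly generated recognition principle (Kan's theorem), using the proposed sets $I$ and $J$ of generating cofibrations and generating acyclic cofibrations. Since $\L(1)/G$ corepresents the $G$-fixed point functor, i.e.\ $\LU(\L(1)/G, X) \cong X^G$, the tensor--hom adjunction $\L(1)/G \otimes (-) \dashv (-)^G$ translates the right lifting conditions against $I$ and $J$ into pointwise conditions on fixed points: a map $f$ is $J$-injective (respectively $I$-injective) if and only if $f^G$ is an acyclic Serre fibration (respectively a Serre fibration) of spaces for every universal subgroup $G \leq \L(1)$. This identifies $J$-injectives with the intersection of universal fibrations and universal equivalences, and $I$-injectives with the universal fibrations, which is the outcome prescribed by Kan's theorem. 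The domains of maps in $I \cup J$ are of the form $\L(1)/G \times K$ for a compact CW complex $K$; since colimits in $\LU$ are created in $\U$, these domains are small relative to relative $I$- and $J$-cell complexes.

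The main obstacle, and the step I would spend the most care on, is showing that every relative $J$-cell complex is a universal equivalence. By transfinite composition and the fact that universal equivalences are closed under sequential colimits along $h$-cofibrations, this reduces to a single pushout: one must check that pushing out $\L(1)/H \times (D^n \times 0 \hookrightarrow D^n \times I)$ induces, on $G$-fixed points, a relative $G$-CW inclusion which is moreover a weak equivalence, for every pair of universal subgroups $G, H \leq \L(1)$. This in turn reduces to showing that $(-)^G$ commutes with such pushouts in a way compatible with the natural filtration, which is precisely the \emph{bi-closedness} condition on the collection of universal subgroups that underlies \cite[Prop.~1.10]{schwede:OOU090316}. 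Once bi-closedness is checked for universal subgroups (the key fact being that the orbit $(\L(1)/H)^G$ decomposes as a disjoint union of orbits of the form $\L(1)/K$ for further universal subgroups $K$), the contractibility of $D^n \times I$ onto $D^n \times 0$ propagates to $G$-fixed points levelwise and hence to the whole cell complex.

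It remains to verify properness and topological enrichment. Right properness is immediate: pullback along a universal fibration is computed on underlying spaces, and $(-)^G$ commutes with such pullbacks, so a weak equivalence of spaces at each $G$-fixed point is preserved by pullback along a Serre fibration at that fixed point. Left properness follows from the usual topological argument, since the generating cofibrations are $h$-cofibrations and pushouts of weak equivalences along $h$-cofibrations are weak equivalences. The topological (i.e.\ $\U$-enriched) pushout-product axiom reduces, via the same fixed-point adjunction, to the corresponding statement for the Quillen model structure on $\U$. Altogether, this matches exactly the structure already worked out in \cite[Prop.~2.15]{schwede:OOU090316}, of which the present statement is a direct citation.
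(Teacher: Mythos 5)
The paper does not actually prove this proposition: it is quoted verbatim from \cite[Prop.~2.15]{schwede:OOU090316}, and the sentence preceding it explains that it is a special case of the general fixed-point model structure associated to a ``bi-closed'' collection of submonoids, \cite[Prop.~1.10]{schwede:OOU090316}. Your sketch reconstructs that argument in outline --- recognition theorem, corepresentability of $(-)^G$ by $\L(1)/G$, smallness via creation of colimits in $\U$ --- so strategically you are doing exactly what the cited reference does.

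Two points need repair, though. First, you have transposed the two lifting classes: under the adjunction, lifting against $\L(1)/G \times (D^n\times 0 \to D^n\times I)$ corresponds to $f^G$ being a Serre fibration, and lifting against $\L(1)/G\times(S^{n-1}\to D^n)$ to $f^G$ being an \emph{acyclic} Serre fibration. Hence the $J$-injectives are the universal fibrations and the $I$-injectives are the acyclic universal fibrations --- the opposite of what you wrote. As stated, your identification feeds the wrong classes into the recognition theorem, so this is not merely cosmetic. Second, your treatment of the key step ($J\text{-cell}\subseteq\mathcal{W}$) leans on an orbit decomposition of $(\L(1)/H)^G$ that does not typecheck: $(\L(1)/H)^G$ is a plain space with no residual $\L(1)$-action, so it cannot be ``a disjoint union of orbits $\L(1)/K$.'' The cleaner and standard route is to observe that each generating acyclic cofibration is an $\L(1)$-equivariant strong deformation retract (the retraction lives entirely on the $D^n\times I$ factor), that cobase changes of such maps are again equivariant strong deformation retracts, and that equivariant homotopy equivalences induce homotopy equivalences on $G$-fixed points for every $G$; combined with the fact that $(-)^G$ commutes with sequential colimits along closed embeddings, this gives the claim without any analysis of $(\L(1)/H)^G$. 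The bi-closedness hypothesis of \cite[Prop.~1.10]{schwede:OOU090316} is the condition under which the general machinery applies, but it is not where the acyclicity of cell complexes comes from. The remaining parts of your sketch (smallness, properness, topological enrichment) are fine.
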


The $\CL$-projective model structure seems unlikely to be Quillen equivalent to $\IU$
with its positive global model structure, but one can perform a left Bousfield localization such that
the weak equivalences become precisely the class of global equivalences. This
detour is necessary in order to guarantee that the adjunction to orthogonal
spaces becomes a Quillen adjunction. We refer the reader to \cite[Section~1]{schwede:OOUv1} for a
detailed discussion of this Bousfield localization and an explicit description of the local objects.

%
%

\begin{thm}[Global model structure for $\L$-spaces, see
\cite{schwede:OOUv1}, Thm.~1.20] \label{thm_globalMS_LU}
There is a cofibrantly generated proper topological
model structure $(\LU)_\mathrm{gl}$ on the category of $\L$-spaces with weak equivalences the
global equivalences and cofibrations as in $(\LU)_{\CL}$. Every $\CL$-cofibration is an $h$-cofibration of $\L$-spaces and a closed
embedding of underlying spaces.
\end{thm}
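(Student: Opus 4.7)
My plan is to obtain $(\LU)_\mathrm{gl}$ as a left Bousfield localization of the universal model structure $(\LU)_\mathrm{univ}$ at a set $S$ consisting of representatives of the maps
\[ \varphi_\# \colon \L(1)/G \to \L(1)/\bar{G}, \quad \theta \cdot G \mapsto (\theta \varphi) \cdot \bar{G}, \]
indexed by pairs of universal subgroups $G, \bar{G} \leq \L(1)$, isomorphisms $\alpha \colon G \to \bar{G}$ of compact Lie groups, and $G$-equivariant linear isometric embeddings $\varphi \colon \alpha^*(\R^\infty_{\bar{G}}) \to \R^\infty_G$. A careful choice of a set (rather than a proper class) of such data is possible because $\L(1)$ has only a set of closed subgroups up to conjugacy and the relevant spaces of equivariant embeddings are small.

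First, I would check the standing hypotheses of Hirschhorn's localization theorem. Left properness of $(\LU)_\mathrm{univ}$ follows since its generating cofibrations are $h$-cofibrations of underlying spaces and pushouts along $h$-cofibrations preserve weak equivalences on $H$-fixed points for every closed $H \leq \L(1)$. Cellularity is the technical hurdle flagged in the excerpt; it is verified by adapting Hirschhorn~10.7.4, checking that the generating cofibrations $(S^{n-1} \to D^n) \times \L(1)/G$ have compact domains and codomains and that cofibrations are effective monomorphisms (both follow from the corresponding statements in $\U$ because colimits and tensors in $\LU$ are created in $\U$).

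Next, I would identify the fibrant objects. By general Bousfield theory, a $(\LU)_\mathrm{univ}$-fibrant object $X$ is $S$-local iff each map in $S$ induces a weak equivalence $\mathrm{Map}(\varphi_\#, X)$. Since $\L(1)/G$ corepresents the $G$-fixed-point functor, this condition is exactly that $\varphi \cdot (-) \colon X^{\bar{G}} \to X^G$ is a weak equivalence for all $G, \bar{G}, \alpha, \varphi$; that is, $X$ is injective. Cofibrations and topological enrichment are unchanged, and right properness transfers because fibrations in the localization are in particular universal fibrations, along which pullbacks preserve global equivalences.

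The main obstacle is establishing that the $S$-local equivalences coincide with the global equivalences. In one direction, every map in $S$ is a global equivalence: for any completely universal $H \leq \L(1)$, the two groups $G$ and $\bar{G}$ (after conjugating by a unit of $\L(1)$ to realize both abstract Lie group structures on subgroups of $\L(1)$) have fixed-point spaces that are equivalent via $\varphi \cdot (-)$ by the very definition of injectivity applied to the universal test object, which together with the density of $H$-fixed orbits of the form $\L(1)/H$ forces $\varphi_\#$ to be a global equivalence. In the converse direction, one uses the standard mapping-space characterization of local equivalences: $f$ is $S$-local iff $\mathrm{map}(f, X)$ is a weak equivalence for every injective-fibrant $X$. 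The point is to exhibit, for each completely universal subgroup $H$, enough injective fibrant replacements that see the $H$-fixed-point homotopy type; this is precisely where the injectivity condition does its work, since it allows one to replace $H$ by any conjugate and any universal sub-realization of the same abstract group without changing the fixed-point homotopy type. Combining these shows that $f^H$ is a weak equivalence for every completely universal $H \leq \L(1)$, yielding exactly the class of global equivalences of Definition~\ref{def:equivalences}.
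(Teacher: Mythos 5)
Your overall strategy --- left Bousfield localization of $(\LU)_\mathrm{univ}$ at the set of maps $\varphi_\#$, cellularity checked \`a la Hirschhorn, and the fibrant objects identified as the injective $\L$-spaces via corepresentability of fixed points by the orbits $\L(1)/G$ --- is exactly the route the paper takes (the paper defers the substance to Schwede's Thm.~2.24 and Prop.~4.1 and only comments on the existence of the localization). The formal localization steps in your first three paragraphs are fine.

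The gap is in your final paragraph, the identification of the $S$-local equivalences with the global equivalences, which is the real content of the theorem. Your argument that each $\varphi_\#$ is a global equivalence is circular as written: injectivity is a condition on a test object $X$ (namely that $\varphi\cdot(-)\colon X^{\bar G}\to X^{G}$ be a weak equivalence), and ``the definition of injectivity applied to the universal test object'' says nothing about the $H$-fixed points of $\L(1)/G$ itself --- the orbits are not injective, and no appeal to ``density of $H$-fixed orbits'' substitutes for the required computation. What is actually needed is an identification of $(\L(1)/G)^{H}$ for a \emph{completely} universal subgroup $H$: these fixed points decompose according to (conjugacy classes of) continuous homomorphisms $H\to G$, with pieces built from spaces of equivariant linear isometric embeddings into $\R^\infty_H$, and those spaces are contractible precisely because $\R^\infty_H$ is a complete universe. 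This is where the hypothesis ``completely universal'' enters and why the $\varphi_\#$ are global but not universal equivalences. The converse direction has a related soft spot: to show a global equivalence $f$ is an $S$-local equivalence you must check $\mathrm{map}(f^{\mathrm{cof}},X)$ is a weak equivalence for injective $X$, and since cofibrant objects are cell complexes on orbits $\L(1)/G$ with $G$ merely universal, you need that injectivity of $X$ allows replacing $G$ by a completely universal realization of the same abstract Lie group without changing $X^G$ up to homotopy; you allude to this, but both inclusions between the two classes of maps are asserted rather than derived, so the heart of Schwede's argument is still missing from the proposal.
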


\begin{prop}
The model structure $(\LU)_\mathrm{gl}$ is a monoidal model category.
\end{prop}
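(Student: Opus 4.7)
The plan is to verify the two conditions defining a monoidal model category in the sense of Hovey: the pushout-product axiom and the unit axiom for the box product $\Lbox$ on $(\LU)_\mathrm{univ}$. Since the functor $- \Lbox Y$ is a left adjoint for every $Y$, it preserves colimits in each variable, so by the usual criterion (e.g.\ \cite[Cor.~4.2.5]{hovey:modelcats}) the pushout-product axiom reduces to a check on the generating sets of (acyclic) cofibrations of $(\LU)_\mathrm{univ}$.

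The generators have the form $i \otimes \L(1)/G$ with $i$ a standard Quillen generator of $\U$ and $G \leq \L(1)$ a universal subgroup (and analogously in the acyclic case). Because $\Lbox$ commutes with tensoring by spaces in each variable, one computes
\[
(i \otimes \L(1)/G) \,\Box_{\Lbox}\, (j \otimes \L(1)/H) \;\cong\; (i \,\Box_{\U}\, j) \otimes \bigl(\L(1)/G \Lbox \L(1)/H\bigr),
\]
and Hopkins' lemma (Lemma~\ref{Hopkins}) identifies the second factor with $\L(2)/(G \times H)$ as an $\L$-space under the outer $\L(1)$-action. The factor $i \,\Box_{\U}\, j$ is an (acyclic) cofibration of spaces by the monoidal Quillen structure on $\U$. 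It therefore remains to show that the functor $(-) \otimes \L(2)/(G \times H) \colon \U \to \LU$ is left Quillen with target $(\LU)_\mathrm{univ}$, which by the enrichment of the model structure reduces to showing that $\L(2)/(G \times H)$ is a cofibrant $\L$-space in $(\LU)_\mathrm{univ}$.

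This cofibrancy claim is the main obstacle. My approach would be to exhibit an $\L(1)$-cell decomposition of $\L(2)/(G \times H)$ whose cells are of the form $\L(1)/K_\alpha \otimes D^{n_\alpha}$ with all isotropy groups $K_\alpha$ universal. The key geometric input is that the inner $\L(1)^2$-action on $\L(2) = \LIE(\R^\infty \oplus \R^\infty, \R^\infty)$ is free, while a filtration of the target by finite-dimensional subspaces should yield an $\L(1)$-equivariant cell structure whose stabilizers are compact and act on $\R^\infty$ in preuniverse fashion; the compactness of $G \times H$ then permits the filtration to descend to the $(G \times H)$-quotient. An alternative route is to invoke Schwede's general transfer theorem for bi-closed collections of submonoids \cite[Prop.~1.10]{schwede:OOU090316}, whose hypotheses are designed precisely to encode this sort of closure of the generators under the monoidal product.

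The unit axiom is comparatively easy. Choose a cofibrant replacement $c \colon Q* \to *$ of the monoidal unit $* \in \LU$ in $(\LU)_\mathrm{univ}$; as an acyclic fibration, $c$ is in particular a universal equivalence. For any cofibrant $X$, the axiom requires that the composite
\[
Q* \Lbox X \xrightarrow{\,c \,\Lbox\, X\,} * \Lbox X \;\cong\; X \Lbox * \xrightarrow{\,\lambda_X\,} X
\]
be a universal equivalence. The map $\lambda_X$ is a strong universal equivalence by Proposition~\ref{prop:lambda} together with Remark~\ref{rem_strong_universal_eq}, while $c \Lbox X$ is a universal equivalence because the same results ensure that $-\Lbox X$ preserves universal equivalences. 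Composing these verifies the unit axiom and completes the proof.
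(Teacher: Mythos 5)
Your reduction of the pushout--product axiom to the generating (acyclic) cofibrations is correct, as is the computation
$(i \otimes \L(1)/G) \Box (j \otimes \L(1)/H) \cong (i \Box j) \otimes \bigl(\L(1)/G \Lbox \L(1)/H\bigr)$ with $\L(1)/G \Lbox \L(1)/H \cong \L(2)/(G\times H)$ via Hopkins' lemma, and your treatment of the unit axiom via Proposition~\ref{prop:lambda} and Remark~\ref{rem_strong_universal_eq} is exactly what the paper does (indeed you are more careful than the paper in invoking the universal-equivalence version of the statement, which is what is actually needed for $(\LU)_\mathrm{univ}$). But there is a genuine gap at the point you yourself flag as the main obstacle: you never establish that $\L(2)/(G\times H)$ is cofibrant, and the route you sketch is not the right one. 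The outer $\L(1)$-action on $\L(2)$ is far from free ($\psi$ fixes $f$ whenever $\psi$ restricts to the identity on $\mathrm{im}(f)$, and $\L(1)$ is only a monoid, so ``isotropy groups'' and equivariant cell structures do not behave as in the group case), and no cell decomposition is needed. The missing idea is an identification, not a filtration: choose an isometric isomorphism $\varphi \colon \R^\infty \to \R^\infty \oplus \R^\infty$; then $f \mapsto f\circ\varphi$ gives an isomorphism of $\L(1)$-spaces $\L(2) \cong \L(1)$ carrying the inner $(G\times H)$-action to right translation by the compact subgroup $\varphi^{-1}(G\oplus H)\varphi \leq \L(1)$, and $\R^\infty_G \oplus \R^\infty_H$ is again a faithful preuniverse, so this subgroup is universal. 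Hence $\L(2)/(G\times H) \cong \L(1)/(G\times H)$ is literally (the source of) a generating cell, and $(i\Box j)\otimes \L(1)/(G\times H)$ is a generating (acyclic) cofibration. This is the content of the result the paper cites, \cite[Prop.~4.1]{schwede:OOU090316}; the paper itself gives no argument beyond that citation plus the one-line deduction of the unit axiom from Proposition~\ref{prop:lambda}. Your appeal to \cite[Prop.~1.10]{schwede:OOU090316} does not close the gap either: in this paper that result is invoked only for the existence of fixed-point-detected model structures, not for compatibility with $\Lbox$.
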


\begin{proof}
The pushout product axiom is proven in \cite[Prop.~1.22]{schwede:OOUv1}, the
unit axiom follows from Proposition \ref{prop:lambda}.
\end{proof}

%
%


The global model structures for orthogonal spaces and $\L$-spaces model the same homotopy theory.

\begin{thm}[\cite{schwede:OOUv1}, Thm.~3.9] \label{thm Quillen equiv IU LU}
The adjunction
\begin{center}
$ \xymatrix@M=10pt@R=3pc@C=4pc{
	(\IU)_\mathrm{pos} \ar@<0.7ex>[r]^{\qu} &
	(\LU)_\mathrm{gl} \ar@<0.7ex>[l]
}$
\end{center}
is a Quillen equivalence.
\end{thm}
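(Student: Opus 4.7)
The plan is to verify first that the adjunction $\qu \dashv \qu^\#$ is a Quillen pair and then to check the Quillen equivalence by the usual criterion: for every positively cofibrant $Y \in \IU$ and every fibrant $X \in (\LU)_{\mathrm{gl}}$, a map $\qu Y \to X$ is a global equivalence iff its adjoint $Y \to \qu^\# X$ is. To streamline the computation I would replace $\qu$ throughout by the homotopy-equivalent functor $\obb$ from Remark \ref{functor_O}, legitimately since the natural transformation $\xi \colon \qu \to \obb$ is a symmetric monoidal global equivalence on cofibrant orthogonal spaces.

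For the Quillen pair, the positive generating (acyclic) cofibrations of $\IU$ have the form $F_V(i)$ with $\dim V \geq 1$ and $i$ a standard (acyclic) cell of spaces. Since $\qu$ is enriched and preserves tensors and colimits, it suffices to show that each free $\L$-space $\qu^*(V) = \L(V \otimes \R^\infty, \R^\infty)$ is cofibrant in $(\LU)_{\mathrm{univ}}$. By Hopkins' lemma (Lemma \ref{Hopkins}) one has $\qu^*(V) \cong \L(n) \cong \L(1)^{\Lbox n}$ for $n = \dim V$, and a cellular filtration by orbits $\L(1)/G$ for universal subgroups $G$ witnesses cofibrancy; since cofibrations in $(\LU)_{\mathrm{gl}}$ coincide with those of $(\LU)_{\mathrm{univ}}$, this gives preservation of (acyclic) cofibrations.

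For the equivalence, I would fix $Y$ positively cofibrant and $X$ fibrant (hence injective) and analyze the two sides via fixed points. On the $\L$-space side, $(\obb Y)^G = \colim_{V} Y(V)^G$, where $V$ ranges over the finite-dimensional $G$-subrepresentations of the complete $G$-universe $\R^\infty_G$; because $Y$ is positively cofibrant, this strict colimit computes the correct $G$-equivariant homotopy type, and by the unwinding of the definition of global equivalence in $\IU$ one recovers exactly the detection criterion of Schwede. On the orthogonal-space side, $\qu^\# X(V) = \LU(\qu^*(V), X)$ is modeled, for any embedding of $V$ into $\R^\infty$ realizing a universal subgroup $G \leq \L(1)$, by the fixed-point space $X^G$; injectivity of $X$ ensures the comparisons $\varphi \cdot (-) \colon X^{\bar G} \to X^G$ are weak equivalences, so this value is independent of the choice up to weak equivalence and compatible with the global-equivalence classification of $\IU$. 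Combining the two descriptions gives both that $\qu^\#$ creates weak equivalences between fibrant objects and that the derived unit is a global equivalence.

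The main obstacle is the last fixed-point analysis: one must show that the injectivity condition imposed by Bousfield-localizing at the maps $\varphi_\#$ is precisely what is needed to match the homotopy-colimit reformulation of global equivalences of orthogonal spaces and, simultaneously, that the positivity constraint ($\dim V \geq 1$) on the cofibrant generators of $(\IU)_{\mathrm{pos}}$ is what prevents the pathological mismatch at $V = 0$ (where $\qu^*(0) = *$ whereas the monoid $\L(1)$ is far from contractible in a universal sense). Getting the cofinality of the indexing category of $G$-subrepresentations right, so that the strict colimit defining $(\obb Y)^G$ genuinely sees every $G$-representation exactly once up to equivariant embedding, is the technical heart of the argument.
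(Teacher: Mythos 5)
The paper offers no proof of this statement: it is quoted from \cite[Thm.~3.7]{schwede:OOU090316}, and Remark \ref{rem_QE_condition} records that Schwede's argument is an equivariant adaptation of Lind's proof of the non-equivariant analogue. Your outline follows exactly that strategy --- reduce $\qu Y$ to the colimit functor $\obb$ of Remark \ref{functor_O}, compute $G$-fixed points of $\colim_V Y(V)$ over the subrepresentations of $\R^\infty_G$, and use injectivity of $X$ to identify $(\qu^\# X)(V)^G$ with a fixed-point space $X^{G_V}$ independently of the chosen embedding --- so the approach is the right one. But as written it has two genuine gaps. First, ``replacing $\qu$ by $\obb$ throughout'' is not licensed: the comparison $\xi$ points \emph{from} $\qu$ \emph{to} $\obb$, so a map $\qu Y \to X$ induces no map $\obb Y \to X$, and $\obb$ is not even a left Quillen functor from $(\IU)_\mathrm{pos}$ to $(\LU)_\mathrm{gl}$ (cf.\ the remark after Theorem \ref{thm_B_strict}: $\obb$ is a Quillen equivalence only for a \emph{different} model structure on $\L$-spaces, relative to the absolute structure on $\IU$). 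What you may use is that $\xi_Y$ is a global equivalence for cofibrant $Y$, which computes the derived unit of $\qu$ by comparison with $\obb$; but then you also need the induced map $\obb^\# X \to \qu^\# X$ to be a global equivalence for injective $X$, or you must restructure the verification around the unit rather than around arbitrary maps $\qu Y \to X$.

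Second, the step you yourself flag as ``the main obstacle'' --- matching the fixed-point colimit description of $\obb Y$ with Schwede's detection criterion for global equivalences in $\IU$, and showing that injectivity of $X$ makes $\LU(\qu^*(V),X)^G \simeq X^{G_V}$ well defined and compatible over the universe --- is the entire mathematical content of the theorem; acknowledging it as an obstacle does not discharge it. A smaller overstatement: you assert that $\qu^*(V) = \L(V\otimes\R^\infty,\R^\infty)$ admits ``a cellular filtration by orbits $\L(1)/G$,'' which is not how cofibrancy is obtained. What does work, and is a nice shortcut available inside this paper, is that $\L(1) = \L(1)/\{e\}$ is itself a generating cell of $(\LU)_\mathrm{univ}$, so Hopkins' Lemma \ref{Hopkins} gives $\qu^*(V) \cong \L(1)^{\Lbox n}$ for $n = \dim V \geq 1$, which is cofibrant by iterated application of the pushout-product axiom for $(\LU)_\mathrm{univ}$; together with the topological structure this yields the left Quillen property on the positive generators. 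Your diagnosis of the role of positivity (the failure at $V = 0$, where $\qu^*(0) = *$ is not cofibrant) is correct and matches the paper's Remark \ref{rem_positive}, which phrases the same obstruction on the fibration side.
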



\begin{rem} \label{rem_positive}
The functor $\qu^\#$ is not a right Quillen functor anymore if we use the
absolute model structure on orthogonal spaces instead:
If $\qu^\#X$ is fibrant in the absolute model
structure, then \cite[Def.~1.2.12]{schwede:GHTv1} (for $G$ the trivial group and $V \to W$
the inclusion $0 \to \R$) implies that the inclusion of fixed points $X^{\L(1)} \to X$ must be a
weak homotopy equivalence of spaces. It seems very unlikely that this could be true for all
fibrant $\L$-spaces $X$.
\end{rem}

Assuming the Transport Theorem (Theorem \ref{thm_A_strict}), we will now prove the
following:

\begin{thm}[Global model structure for $*$-modules] \label{thm_B_strict}
There is a cofibrantly generated proper topological model structure on the
category $\Mstar$ of $*$-modules, the \emph{global model structure}
$(\Mstar)_\mathrm{gl}$. Its weak equivalences are the global equivalences of
underlying $\L$-spaces, its fibrations are detected by the functor $\Fbox{*}{-}
\colon \Mstar \to (\LU)_\mathrm{gl}$. Let $I$ and $J$ be any sets of generating
(acyclic) cofibrations for $(\LU)_\mathrm{gl}$, then $I \Lbox *$ and $J \Lbox
*$ are generating (acyclic) cofibrations for $(\Mstar)_\mathrm{gl}$. \\
Moreover, the global model structure for $\Mstar$ is monoidal and satisfies the
monoid axiom \cite[Def.~3.3]{schwedeshipley:algsmods}
\wrt $\Lbox$. It fits into the following commutative (up to natural isomorphism)
triangle of monoidal Quillen equivalences:
\begin{center}
$ \xymatrix@M=10pt@R=3pc@C=4pc{
	(\IU)_\mathrm{pos} \ar@<0.7ex>[r]^{\qu}
	\ar@<0.7ex>[dr]^(.55){\qu_*} &
	(\LU)_\mathrm{gl} \ar@<0.7ex>[l] \ar@<-0.7ex>[d] \\
	\, &
	(\Mstar)_\mathrm{gl} \ar@<-0.7ex>[u]_{\Fbox{*}{-}}
	\ar@<0.7ex>[ul]
}$
\end{center}
\end{thm}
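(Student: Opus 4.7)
The plan is to apply Theorem \ref{thm_A_strict} (the strict form of Theorem A) to Schwede's global model structure $(\LU)_\mathrm{gl}$ from Theorem \ref{thm_globalMS_LU}, and then to assemble the output into the Quillen-equivalence triangle. First I would verify that $(\LU)_\mathrm{gl}$ satisfies the mild conditions of Theorem A. Since its generating (acyclic) cofibrations are described explicitly as tensors with the orbits $\L(1)/G$ together with the Bousfield-localizing cells built from the maps $\varphi_\#$, and since Proposition \ref{prop:lambda} pins down how $(-) \Lbox *$ and $\Fbox{*}{-}$ interact with global equivalences, this verification should be routine. Theorem A then directly provides the existence of $(\Mstar)_\mathrm{gl}$ with generating sets $I \Lbox *$ and $J \Lbox *$, weak equivalences the global equivalences, fibrations detected by $\Fbox{*}{-}$, the monoidal model category axioms (since $(\LU)_\mathrm{gl}$ is monoidal), and a Quillen equivalence $(-) \Lbox * \dashv \Fbox{*}{-}$ with $(\LU)_\mathrm{gl}$. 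Properness and the topological enrichment transfer in the same way.

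Next I would assemble the triangle. Composing the already-known Quillen equivalence $\qu \dashv \qu^\#$ between $(\IU)_\mathrm{pos}$ and $(\LU)_\mathrm{gl}$ (recalled just above Remark \ref{rem_QE_condition}) with the vertical Quillen equivalence just constructed yields a Quillen equivalence whose left adjoint is naturally isomorphic to $\qu_*$, by the identification of $\qu_*$ with $(-) \Lbox * \circ \qu$ recorded at the end of Subsection \ref{subsect:IU}. This same natural isomorphism supplies the up-to-natural-equivalence commutativity of the triangle. Each left adjoint in the triangle is strong symmetric monoidal ($\qu$ and $\qu_*$ by construction, and $(-) \Lbox *$ because $* \Lbox *$ is trivial so that $\Lbox$ restricts to a product on $\Mstar$), so all three adjunctions are monoidal Quillen equivalences.

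The one statement in Theorem \ref{thm_B_strict} not packaged into Theorem A is the monoid axiom of \cite{schwedeshipley:algsmods} for $(\Mstar)_\mathrm{gl}$ with respect to $\Lbox$. I would reduce this to global-equivalence stability in $\LU$: for a generating acyclic cofibration $j = j' \Lbox *$ of $\Mstar$ and any $M \in \Mstar$, the map $j \Lbox M$ agrees with $j' \Lbox M$ up to the isomorphism $* \Lbox M \cong M$, and Proposition \ref{prop:lambda} guarantees that $(-) \Lbox Z$ preserves global equivalences in $\LU$ for every $Z$. The monoid axiom then reduces to stability of global equivalences under pushouts along and transfinite compositions of maps in $(J \Lbox *) \Lbox \Mstar$, which in turn reduces to the same property in $\LU$. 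The main obstacle I expect is precisely this $h$-cofibration stability argument: one must show that pushouts of $J \Lbox *$-maps along arbitrary maps in $\Mstar$ are $h$-cofibrations of underlying spaces so that the standard stability of global equivalences applies. Once that is in hand, everything else is a formal assembly of Theorem A with results already established in Section \ref{sect:prelim}.
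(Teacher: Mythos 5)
Your proposal is correct and follows essentially the same route as the paper: apply Theorem \ref{thm_A_strict} to $(\LU)_\mathrm{gl}$, whose hypotheses are readily checked, and assemble the triangle from the known Quillen equivalence $\qu \dashv \qu^\#$ together with the identification of $\qu_*$ with the composite $((-) \Lbox *) \circ \qu$. The only superfluous step is your separate treatment of the monoid axiom: it is already contained in part h) of Theorem \ref{thm_A_strict} (whose extra hypotheses --- generating cofibrations are $h$-cofibrations and the weak equivalences are detected by fixed-point functors --- hold for $(\LU)_\mathrm{gl}$), and the argument you sketch, via preservation of global equivalences by $(-) \Lbox Z$ plus stability of $h$-cofibrations that are global equivalences under cobase change and transfinite composition, is precisely the one the paper uses to prove that part.
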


\begin{proof}
The global model structure obviously satisfies the
requirements of the Transport Theorem (Theorem \ref{thm_A_strict}), which immediately
implies the existence and properties of the model structure
$(\Mstar)_\mathrm{gl}$.
It also proves that the vertical adjunction is a
Quillen equivalence. The horizontal adjunction is a Quillen equivalence by Theorem~\ref{thm
Quillen equiv IU LU}, and we have already seen that all adjunctions are monoidal.
\end{proof}

\begin{rem}
There is a variant of Theorem~\ref{thm_B_strict} \wrt the functor $\obb \colon \IU \to \LU$
introduced in Remark \ref{functor_O}: It is possible to establish a model
structure on $\L$-spaces with weak equivalences the global equivalences and such
that $\obb$ is a left Quillen equivalence \wrt the absolute global model
structure on orthogonal spaces. This model structure also satisfies the
hypotheses of the Transport Theorem, but is harder to work with as the cofibrations
cannot only be characterized in terms of fixed points of group
actions. It also lifts to monoids and the analogue of Theorem~\ref{thm_C_strict} holds. Details
can be found in the author's (unpublished) Master's thesis \cite{boehme:masters}.
\end{rem}

\begin{rem}
The diagram in Theorem \ref{thm_B_strict} can be extended to the
right: By a version of Elmendorf's theorem, $(\LU)_\mathrm{gl}$ is Quillen
equivalent to a model category of ``systems of global fixed point sets''. As
usual, these are diagram spaces indexed on the opposite of a suitable
``global'' orbit category. We refer to \cite[Section~2]{schwede:OOUv1} for details.
\end{rem}


\section{Monoids and modules in global homotopy theory}
\label{sect:monoids}

Monoids \wrt $\Lbox$ and their modules have been described
non-equivariantly by Blumberg, Cohen and Schlichtkrull, see
\cite[Thm.~4.18]{BCS:THH}. We describe ``global'' analogues of their result and
prove our second main result, Theorem~\ref{thm_C_strict}.

Recall from Section \ref{sect:prelim} that $\L$ denotes the operad of linear
isometric embeddings of $\R^\infty$. The following identifications are a
consequence of Hopkins' Lemma \ref{Hopkins}.

\begin{prop}[\cite{BCS:THH},~Prop.~4.7] \label{A_infty}
The category of $A_\infty$-spaces structured by $\L$ (considered as a
non-symmetric operad) is isomorphic to the category of $\Lbox$-monoids in $\LU$.
The category of $E_\infty$-spaces structured by $\L$ (considered as a
symmetric operad) is isomorphic to the category of commutative $\Lbox$-monoids
in $\LU$.
\end{prop}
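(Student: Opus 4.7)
My plan is to unwind both sides of the claimed isomorphism by means of Hopkins' Lemma~\ref{Hopkins}, which gives $\L(m) \Lbox \L(n) \cong \L(m+n)$ and, iterated, a natural isomorphism $M^{\Lbox n} \cong \L(n) \times_{\L(1)^n} M^n$ for every $\L$-space $M$. This identification is the bridge between the two structures: the left hand side is what a $\Lbox$-monoid multiplies, while the right hand side is what an $\L$-algebra acts on.

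First, suppose $(M, \mu, \eta)$ is a $\Lbox$-monoid in $\LU$. Iterating $\mu$ yields a well-defined map $\mu^{(n)} \colon M^{\Lbox n} \to M$ for every $n \geq 0$, with $\mu^{(0)} = \eta$ and $\mu^{(2)} = \mu$. I then define $\theta_n \colon \L(n) \times M^n \to M$ as the composite
\[ \L(n) \times M^n \twoheadrightarrow \L(n) \times_{\L(1)^n} M^n \cong M^{\Lbox n} \xrightarrow{\mu^{(n)}} M. \]
Associativity and unitality of $\mu$, together with naturality of the Hopkins isomorphism, translate into the operad algebra axiom that $\theta_n$ is compatible with the operadic composition $\gamma$ and with the identity of $\L(1)$.

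Conversely, given an $\L$-algebra $(M, \{\theta_n\})$, the map $\theta_1$ endows $M$ with the structure of an $\L$-space. The operad algebra axiom applied to $\gamma(c; d_1, \ldots, d_n)$ with $c \in \L(n)$ and $d_i \in \L(1)$ reads
\[ \theta_n(c \circ (d_1, \ldots, d_n); x_1, \ldots, x_n) = \theta_n(c; d_1 \cdot x_1, \ldots, d_n \cdot x_n), \]
which says precisely that $\theta_n$ factors through the coequalizer $\L(n) \times_{\L(1)^n} M^n \cong M^{\Lbox n}$, producing maps $\widetilde{\theta}_n \colon M^{\Lbox n} \to M$. Setting $\mu := \widetilde{\theta}_2$ and $\eta := \theta_0 \colon * \to M$, the remaining operad algebra axioms force $\mu$ to be associative and $\eta$ to be a two-sided unit. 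Both constructions are functorial in $M$, and they are mutually inverse by inspection because each is merely a repackaging of the Hopkins identification. For the second statement, the same argument applies while tracking the $\Sigma_n$-actions: the symmetry of $\Lbox$ is induced by the $\Sigma_2$-action on $\L(2)$ permuting its two orthogonal summands, so commutativity of $\mu$ corresponds exactly to the $\Sigma_n$-equivariance of $\theta_n$ demanded of an algebra over a symmetric operad.

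The main obstacle is bookkeeping: one has to verify that the iterated Hopkins isomorphism $M^{\Lbox n} \cong \L(n) \times_{\L(1)^n} M^n$ is compatible with all partitions $n = n_1 + \cdots + n_k$ in a way that turns the operadic composition $\gamma \colon \L(k) \times \L(n_1) \times \cdots \times \L(n_k) \to \L(n_1+\cdots+n_k)$ into the iterated-box-product structure on monoid multiplication. This coherence check is what makes the two sets of axioms translate into each other, and it is essentially the only nontrivial content of the proof, the remainder being a diagram chase.
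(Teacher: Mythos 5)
Your argument is correct and follows exactly the route the paper intends: the paper states this result as a citation of \cite{BCS:THH} and notes only that it is ``a consequence of Hopkins' Lemma~\ref{Hopkins},'' and your proposal spells out precisely that consequence, namely the identification $M^{\Lbox n} \cong \L(n) \times_{\L(1)^n} M^n$ and the resulting translation between operad-algebra structure maps and iterated monoid multiplications (tracking $\Sigma_n$-actions for the commutative case). Nothing in your write-up diverges from or adds a gap to the argument the paper relies on.
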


\begin{cor}[\cite{BCS:THH},~Sect.~4.4]
The $\Lbox$-monoids in $\Mstar$ are those $A_\infty$-spaces which are
$*$-modules. The functor $- \Lbox * \colon \LU \to \Mstar$ takes $\Lbox$-monoids
in $\LU$ to $\Lbox$-monoids in $\Mstar$ and the natural map $\lambda_X \colon X
\Lbox * \to X$ is a map of $\Lbox$-monoids if $X$ is a $\Lbox$-monoid. The
analogous statement is true for commutative monoids and $E_\infty$-spaces.
\end{cor}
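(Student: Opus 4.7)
The plan is to deduce all three assertions (and their $E_\infty$/commutative analogues) from two formal observations: the inclusion $i \colon \Mstar \hookrightarrow \LU$ is strict symmetric monoidal, and its left adjoint $(-)\Lbox * \colon \LU \to \Mstar$ is strong symmetric monoidal, with $\lambda$ playing the role of a monoidal natural transformation from $i \circ ((-)\Lbox *)$ to $\id_{\LU}$.

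For the first observation, one checks that $* \in \Mstar$ because $* \Lbox *$ is a single point by Lemma~I.8.1 of \cite{EKMM}, so $\lambda_*$ is automatically an isomorphism, and $\Lbox$ restricts to $\Mstar$ as already recorded in Subsection~\ref{subsect:LU_Mstar}. Hence $i$ is strictly compatible with tensor and unit. Because $i$ is also fully faithful, a (commutative) $\Lbox$-monoid structure on $M \in \Mstar$ is literally the same data as such a structure on $iM \in \LU$. Combined with Proposition~\ref{A_infty}, this gives the first claim and its $E_\infty$ analogue.

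For the second observation, I would equip $(-)\Lbox *$ with the natural structure isomorphisms
\[ (X \Lbox *)\Lbox(Y \Lbox *) \;\xrightarrow{\sim}\; (X \Lbox Y)\Lbox(* \Lbox *) \;\xrightarrow{\sim}\; (X \Lbox Y)\Lbox *, \]
built from the associativity and symmetry coherence of $\Lbox$ in $\LU$ together with $* \Lbox * \cong *$, and with the unit constraint $* \cong * \Lbox *$. Since all these maps are isomorphisms, the functor is strong symmetric monoidal and therefore transports (commutative) $\Lbox$-monoids in $\LU$ to (commutative) $\Lbox$-monoids in $\Mstar$. Concretely, the induced multiplication on $X \Lbox *$ is the composite $(X \Lbox *)\Lbox(X \Lbox *) \cong (X \Lbox X)\Lbox * \xrightarrow{\mu \Lbox *} X \Lbox *$, which gives the second claim.

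For the third claim, I would verify the monoid equation
\[ \lambda_X \circ \mu_{X \Lbox *} \;=\; \mu \circ (\lambda_X \Lbox \lambda_X) \colon (X \Lbox *)\Lbox(X \Lbox *) \to X. \]
Applying naturality of $\lambda$ to $\mu \colon X \Lbox X \to X$, the left-hand side unpacks as $\mu \circ \lambda_{X \Lbox X} \circ \alpha$, where $\alpha$ is the coherence isomorphism $(X \Lbox *)\Lbox(X \Lbox *) \cong (X \Lbox X)\Lbox *$ from the second step. The equation thus reduces to the coherence identity $\lambda_{X \Lbox X} \circ \alpha = \lambda_X \Lbox \lambda_X$. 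Compatibility with units is analogous and easier. The $E_\infty$/commutative version runs by the same argument, replacing $\L$ viewed as a non-symmetric operad by $\L$ viewed as a symmetric operad throughout, and Proposition~\ref{A_infty} in its $E_\infty$ form. The main obstacle is precisely this last coherence identity: because $(\LU, \Lbox, *)$ is only weakly symmetric monoidal (the unit isomorphisms are replaced by the weak equivalence $\lambda$), one cannot simply invoke a black-box coherence theorem. I would prove it by unfolding both sides using the explicit formula $X \Lbox Y = \L(2) \times_{\L(1)^2}(X \times Y)$ and tracking how the $\L(2)$-coordinates recombine under associativity, symmetry, and the unital transformation. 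This is a somewhat delicate diagram chase because of the balanced-product quotients, but no new input beyond Hopkins' Lemma~\ref{Hopkins} is required.
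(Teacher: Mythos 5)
Your proposal is correct and follows essentially the same route as the source the paper defers to: the paper gives no proof of its own here but cites \cite[Sect.~4.4]{BCS:THH}, where the argument is exactly the one you outline --- the inclusion $\Mstar \hookrightarrow \LU$ is strictly monoidal and fully faithful, $(-) \Lbox *$ is strong symmetric monoidal via $* \Lbox * \cong *$ (Lemma I.8.1 of \cite{EKMM}), and $\lambda$ is checked to be multiplicative by naturality plus a coherence identity. The identity $\lambda_{X \Lbox X} \circ \alpha = \lambda_X \Lbox \lambda_X$ that you rightly flag as the only delicate point does hold on the nose: unwinding both sides on representatives of the balanced products via Hopkins' Lemma \ref{Hopkins}, each sends the class of $\bigl[\phi_1 \oplus \phi_2, ([\chi_1 \oplus \chi_2,(x,*)],[\chi_1' \oplus \chi_2',(x',*)])\bigr]$ to $[\phi_1\chi_1 \oplus \phi_2\chi_1', (x,x')]$, so no further input is needed.
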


In \cite{schwedeshipley:algsmods}, Schwede and Shipley describe sufficient
conditions for a cofibrantly generated monoidal model structure
to lift to the associated categories of $R$-modules and
$R$-algebras, respectively, where $R$ is any (commutative) monoid. When applied
to the global model structure on $*$-modules, this yields:

\begin{thm} \label{MS_mon_mod_Mstar} 
Consider the category of $*$-modules equipped with the global model structure
and let $R$ be a $\Lbox$-monoid in $\Mstar$. Call a morphism of $R$-algebras a
weak equivalence (respectively fibration) if it is a weak equivalence
(respectively fibration) of underlying $*$-modules. With respect to these
classes of morphisms, the following holds:
\begin{enumerate}[1)]
    \item The category of left $R$-modules is a
    cofibrantly generated model category.
    \item If $R$ is commutative, then the category of
    $R$-modules is a cofibrantly generated model category satisfying the pushout product axiom and
    the monoid axiom.
    \item If $R$ is commutative, then the category of
    $R$-algebras is a cofibrantly generated model category. If the source of a
    cofibration of $R$-algebras is cofibrant as an $R$-module, then the map is
    a cofibration of $R$-modules.
\end{enumerate}
In all cases, sets of generating cofibrations and
acyclic cofibrations are given by the images of generating sets for $\Mstar$ under the
free functor.
\end{thm}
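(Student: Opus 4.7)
The plan is to invoke the general lifting machinery of Schwede and Shipley~\cite[Thm.~4.1]{schwedeshipley:algsmods}, which gives precisely the conclusions (1)--(3) for any cofibrantly generated monoidal model category satisfying the monoid axiom, provided a suitable smallness hypothesis on the domains of the generating (acyclic) cofibrations holds. Thus the proof reduces to verifying that $(\Mstar)_\mathrm{gl}$ meets all of these hypotheses, and then reading off the description of generating sets.

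First I would recall from Theorem~\ref{thm_B_strict} that $(\Mstar)_\mathrm{gl}$ is cofibrantly generated, is a monoidal model category with respect to $\Lbox$, and satisfies the monoid axiom. Explicit generating (acyclic) cofibrations are of the form $I \Lbox *$ and $J \Lbox *$, where $I$ and $J$ are the generating sets of $(\LU)_\mathrm{gl}$ described in the universal model structure; concretely, $I$ consists of maps $(S^{n-1} \to D^n) \times \L(1)/G$ with $G$ ranging over universal subgroups of $\L(1)$, and similarly for $J$. After applying $-\Lbox *$ these still have compact domains in $\U$, and since the forgetful functor $\Mstar \to \U$ preserves filtered colimits along $h$-cofibrations (colimits in $\Mstar$ being created in $\LU$ by Proposition~\ref{mirror_image}, and hence underlying in $\U$), these domains are small relative to the subcategory of (relative) $I$- and $J$-cell complexes. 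This verifies the smallness hypothesis required by~\cite{schwedeshipley:algsmods}.

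With all hypotheses in place, part~(1) of the theorem is the content of~\cite[Thm.~4.1(1)]{schwedeshipley:algsmods} applied to $(\Mstar)_\mathrm{gl}$ and an arbitrary $\Lbox$-monoid $R$. For part~(2), commutativity of $R$ ensures that the category of $R$-modules inherits a symmetric monoidal product $-\wedge_R -$, and~\cite[Thm.~4.1(2)]{schwedeshipley:algsmods} transfers the monoidal structure and the monoid axiom from $\Mstar$ to $R$-modules. Part~(3) follows by applying~\cite[Thm.~4.1(3)]{schwedeshipley:algsmods}, noting that an $R$-algebra is the same as a $\Lbox$-monoid under $R$ when $R$ is commutative, and the cofibrancy comparison between $R$-algebras and $R$-modules is a formal consequence of the construction of cofibrations via transfinite compositions of pushouts of free maps.

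The only point requiring genuine care, and which I view as the main obstacle, is the smallness verification: one has to make sure that after passing through $-\Lbox *$ and through the free $R$-module or free $R$-algebra functor, the relevant transfinite compositions of pushouts of generating cofibrations remain controlled in $\U$. This is handled by the standard argument that relative cell complexes built from the generators are $h$-cofibrations of underlying spaces, combined with the fact that compact spaces are small relative to $h$-cofibrations in the category of compactly generated weak Hausdorff spaces; once this is in place, Schwede--Shipley's theorem applies verbatim and the final sentence identifying the generating sets as images of $I$ and $J$ under the respective free functors is automatic.
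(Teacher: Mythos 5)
Your overall strategy coincides with the paper's: both reduce the theorem to \cite[Thm.~4.1]{schwedeshipley:algsmods}, quoting from Theorem~\ref{thm_B_strict} that $(\Mstar)_\mathrm{gl}$ is a cofibrantly generated monoidal model category satisfying the monoid axiom, so that only a smallness hypothesis remains to be checked. The identification of the generating sets and the treatment of parts (1)--(3) are fine.

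However, your smallness verification contains a genuine error. You claim that the domains of the generating (acyclic) cofibrations, namely $S^{n-1} \times \L(1)/G$ and $D^n \times \L(1)/G$ (and their images under $- \Lbox *$), are compact, and you then invoke the smallness of compact spaces relative to $h$-cofibrations. But $\L(1) = \LIE(\R^\infty, \R^\infty)$ is an infinite-dimensional mapping space and $\L(1)/G$, the quotient by a compact subgroup, is not compact; hence these domains are not compact and are certainly not $\aleph_0$-small. The step as written fails. The correct route, which the paper takes, is to drop compactness entirely: Lemma~\ref{smallness} shows that \emph{every} $\L$-space, co-$*$-module and $*$-module is $\kappa$-small relative to sequences of closed embeddings for a suitable (possibly large) cardinal $\kappa$. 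Combined with the weakened smallness hypothesis of \cite[Rem.~4.2]{schwedeshipley:algsmods} and the observation that the forgetful functors from $R$-modules and from monoids commute with filtered colimits (so that relative cell complexes in these categories are still sequences of closed embeddings of underlying objects), this yields the hypothesis (R1)-type condition needed for \cite[Thm.~4.1]{schwedeshipley:algsmods}. You should replace your compactness argument with this cardinality-based smallness argument; once that is done, the rest of your proof goes through.
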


For $R = *$, the category of $R$-algebras is the category of
$\Lbox$-monoids. It has a cofibrantly generated model structure by part 3) of
the theorem.

\begin{proof}
We check the hypotheses of \cite[Thm.~4.1]{schwedeshipley:algsmods}. As
explained in \cite[Rem.~4.2]{schwedeshipley:algsmods}, the smallness assumption
can be weakened; it then follows from the fact that the forgetful functors from
$R$-modules and monoids, respectively, commute with filtered colimits, and from
Lemma \ref{smallness}. \\
By part h) of Theorem \ref{thm_A_strict}, $(\Mstar)_\mathrm{gl}$
satisfies the monoid axiom as defined in \cite[Def.~3.3]{schwedeshipley:algsmods}.
\end{proof}

\begin{thm} \label{MS_mon_mod_LU}
The analogue of Theorem \ref{MS_mon_mod_Mstar} \wrt the monoidal model
category $(\LU)_\mathrm{gl}$ is true.
\end{thm}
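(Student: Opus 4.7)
The plan is to apply Schwede and Shipley's transfer theorem \cite[Thm.~4.1]{schwedeshipley:algsmods} to $(\LU)_\mathrm{gl}$, paralleling the proof of Theorem \ref{MS_mon_mod_Mstar} but with $\LU$ in place of $\Mstar$. Three conditions must be checked: cofibrant generation, the monoidal model category axioms, and the monoid axiom; in addition, the usual smallness condition on the sources of generating (acyclic) cofibrations must hold in the appropriately relaxed form.

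Cofibrant generation of $(\LU)_\mathrm{gl}$ is already part of Theorem \ref{thm_globalMS_LU}. The pushout product axiom for $(\LU)_\mathrm{univ}$ is established in \cite[Prop.~4.1]{schwede:OOU090316}, and it descends to $(\LU)_\mathrm{gl}$ via the standard Bousfield-localization argument: the cofibrations of the two structures coincide, and the additional acyclic cofibrations arising from localizing at the maps $\varphi_\#$ interact well with $\Lbox$ thanks to the second clause of Proposition \ref{prop:lambda}. The unit axiom is an immediate consequence of Proposition \ref{prop:lambda} applied to the weak unit $*$. The smallness requirement is handled exactly as in the proof of Theorem \ref{MS_mon_mod_Mstar}: the forgetful functors from $R$-modules and $R$-algebras to $\LU$ commute with filtered colimits, so that one may invoke Lemma \ref{smallness} to produce the necessary smallness in $\LU$.

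The main obstacle is the monoid axiom. My approach is to adapt the strategy used for part h) of Theorem \ref{thm_A_strict} (Theorem A), this time working directly in $\LU$ without passing through $\Mstar$. The key step is to show that for every generating acyclic cofibration $j$ of $(\LU)_\mathrm{gl}$ and every $X \in \LU$, the map $j \Lbox X$ is an $h$-cofibration that is also a global equivalence. The global-equivalence part follows from Proposition \ref{prop:lambda}, since $- \Lbox X$ preserves global equivalences for every $X$. The $h$-cofibration part follows because $j$ is in particular an $h$-cofibration (the generating acyclic cofibrations of $(\LU)_\mathrm{univ}$ are cofibrations in a topological model structure, and the additional generators coming from the localization at the $\varphi_\#$ are mapping cylinder inclusions), and tensoring with $X$ via $- \Lbox X$ is a left adjoint on the topological category $\LU$, hence preserves $h$-cofibrations.

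Finally, the class of $h$-cofibrations that are global equivalences is closed under cobase change (by left properness of $(\LU)_\mathrm{gl}$) and under transfinite composition (by a standard argument exploiting that taking $G$-fixed points commutes with sequential colimits of $h$-cofibrations of $G$-spaces, and that weak equivalences of spaces are closed under such colimits). Assembling these ingredients as in \cite[Lemma~3.5]{schwedeshipley:algsmods} yields the monoid axiom for $(\LU)_\mathrm{gl}$, completing the verification of the hypotheses of the transfer theorem and hence the proof.
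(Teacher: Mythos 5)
Your proposal verifies the \emph{hypotheses} of \cite[Thm.~4.1]{schwedeshipley:algsmods} (cofibrant generation, pushout product and unit axioms, monoid axiom, smallness) and then invokes the theorem as a black box. This misses the actual difficulty, which is that the \emph{proof} of that theorem is not applicable to $\LU$ as stated: the argument for the third statement --- the lift of the model structure to $R$-algebras, i.e.\ to $\Lbox$-monoids --- uses the unital isomorphism of the monoidal structure in an essential way when it analyzes pushouts of free-monoid maps in order to show that relative $J_T$-cell complexes are weak equivalences. In $\LU$ the transformation $\lambda_X \colon X \Lbox * \to X$ is only a global equivalence, not an isomorphism, so that step breaks down and must be replaced. (The first two statements, about modules, survive a close inspection of the Schwede--Shipley proof, and for those your verification --- which in any case just reproves the monoid axiom already supplied by part h) of Theorem \ref{thm_A_strict} --- is fine.) The missing idea is a direct treatment of the free monoid functor $T(X) = \coprod_{n \geq 0} X^{\Lbox n}$: for a generating acyclic cofibration $j \colon A \to B$ one factors the $n$-th summand $j^{\Lbox n}$ of $T(j)$ as the composite
\[ (j \Lbox A^{\Lbox (n-1)}) \circ (B \Lbox j \Lbox A^{\Lbox (n-2)}) \circ \ldots \circ (B^{\Lbox (n-1)} \Lbox j), \]
each factor being an $h$-cofibration and a global equivalence by Proposition \ref{prop:lambda} and Lemma \ref{preservation_h-cofibrations}; one then concludes via closure of this class under coproducts, cobase change and transfinite composition that every relative $J_T$-cell complex is a global equivalence. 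Your proposal contains no analysis of $T$ at all, so the monoid case is not established.

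Two smaller points. First, your closure-under-cobase-change step appeals to left properness; that is not the right tool, since the maps being pushed out (such as $j \Lbox X$ or the summands of $T(j)$) need not be cofibrations. What is actually used is the gluing lemma for $h$-cofibrations (Lemma \ref{gluing_lemma}), which applies because these maps are $h$-cofibrations. Second, the effort you spend re-establishing the monoid axiom and the pushout product axiom for $(\LU)_\mathrm{gl}$ is unnecessary: both are already recorded in the paper (Theorem \ref{thm_A_strict} h) and the proposition following Theorem \ref{thm_globalMS_LU}), and neither is the point of this theorem.
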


\begin{proof}
A close inspection of the proof of
\cite[Thm.~4.1]{schwedeshipley:algsmods} shows that the first two
statements do not require that the unital transformation is an isomorphism, so
these hold because $(\LU)_\mathrm{gl}$ satisfies the monoid axiom, see part h)
of Theorem \ref{thm_A_strict}. The proof of the third statement makes use of the
unital isomorphism in order to verify that all relative $J_T$-cell complexes
are weak equivalences. We will give an alternative proof of this fact
instead: \\
Here, $T \colon X \mapsto \coprod_{n \geq 0} X^{\Lbox n}$ is the composition of
the free monoid functor with the forgetful functor, $J$ is any set of
generating acyclic cofibrations for $(\LU)_\mathrm{gl}$, and $J_T$ denotes its
image under
$T$. All maps in $J$ are $h$-cofibrations (i.e., have the homotopy extension
property) and global equivalences. For each $Z \in \LU$, the left adjoint
functor $Z \Lbox (-) \colon \LU \to \LU$ preserves these properties by
Proposition \ref{prop:lambda} and Lemma
\ref{preservation_h-cofibrations}. Thus, for a map $j \colon A \to B$ in $J$
and $n \geq 2$, we can write the $n$-th summand $j^{\Lbox n}$ of $T(j)$ as a
composition
\[ (j \Lbox A^{\Lbox (n-1)}) \circ (B \Lbox j \Lbox A^{\Lbox (n-2)})
\circ \ldots \circ (B^{\Lbox (n-1)} \Lbox j). \]
of maps which are both $h$-cofibrations and global equivalences. These
properties are stable under composition and coproducts, hence $T(j)$ has both
properties.
Moreover, the class of $h$-cofibrations which are
global equivalences is closed under cobase change and transfinite composition,
thus each morphism in $J_T-cell$ is a global equivalence. Smallness is not an
issue because all $\L$-spaces are small relative to closed embeddings
(Lemma \ref{smallness}), and so relative to all images of cofibrations under
$T$.
\end{proof}

\begin{thm} 
\label{MS_mon_mod_IU}
The analogue of Theorem \ref{MS_mon_mod_Mstar} \wrt the monoidal model
categories $(\IU)_\mathrm{abs}$ and $(\IU)_\mathrm{pos}$ is true.
\end{thm}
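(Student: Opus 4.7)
The plan is to verify the hypotheses of Schwede--Shipley's Theorem~4.1 in \cite{schwedeshipley:algsmods} for both $(\IU)_\mathrm{abs}$ and $(\IU)_\mathrm{pos}$, in direct parallel to the proof of Theorem \ref{MS_mon_mod_Mstar}. A pleasant feature here is that the monoidal unit $\one$ is a strict unit for the box product on $\IU$; consequently, in contrast to the proof of Theorem \ref{MS_mon_mod_LU}, no workaround is needed for part 3), and the Schwede--Shipley argument applies directly once the monoid axiom and smallness have been established.

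For smallness, an orthogonal space is a diagram in $\U$ with values in compactly generated weak Hausdorff spaces, so the levelwise argument used for $\L$-spaces in Lemma \ref{smallness} shows that every orthogonal space is small with respect to the class of levelwise closed embeddings. Since the forgetful functors from $R$-modules, from monoids and from $R$-algebras all commute with filtered colimits of underlying orthogonal spaces, the smallness hypothesis holds in the weakened form discussed in \cite[Rem.~4.2]{schwedeshipley:algsmods}.

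For the monoid axiom, both global model structures on $\IU$ are monoidal by Theorem \ref{thm_MS_IU}, so the pushout product axiom is in place. Mirroring the argument from the proof of Theorem \ref{MS_mon_mod_LU}, I would choose a set $J$ of generating acyclic cofibrations and verify that $j \boxtimes Z$ is simultaneously a levelwise $h$-cofibration and a global equivalence for every $j \in J$ and every orthogonal space $Z$; the class of maps with both properties is closed under cobase change and transfinite composition, so the monoid axiom follows. The $h$-cofibration part is standard for Day convolutions tensored over $\U$.

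The main obstacle, as in the $\L$-space case, is preservation of global equivalences under box product with an arbitrary orthogonal space. Unlike in $\LU$, Proposition \ref{prop:lambda} is not available, since there is no natural comparison map to the categorical product that one could exploit; instead one has to appeal to Schwede's direct analysis of the box product via fixed points and compact Lie group representations in \cite{schwede:GHT050516}. With this input secured, the three parts of Theorem \ref{MS_mon_mod_Mstar} transfer verbatim from \cite[Thm.~4.1]{schwedeshipley:algsmods}, and the generating (acyclic) cofibrations for the resulting module and algebra categories are the images of those for $(\IU)_\mathrm{abs}$ or $(\IU)_\mathrm{pos}$ under the corresponding free functors.
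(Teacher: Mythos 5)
Your overall strategy coincides with the paper's: both reduce everything to \cite[Thm.~4.1]{schwedeshipley:algsmods}, with smallness handled levelwise and the monoid axiom as the only substantive input, and you correctly observe that, because $\one$ is a strict unit in $\IU$, no workaround for part 3) is needed (unlike in the proof of Theorem \ref{MS_mon_mod_LU}). The difference is in how the monoid axiom is secured. The paper disposes of it in one line: it cites \cite[Prop.~I.7.13]{schwede:GHT050516} for the absolute global model structure and then notes that every positive acyclic cofibration is an absolute acyclic cofibration, so the monoid axiom for $(\IU)_\mathrm{pos}$ is inherited from $(\IU)_\mathrm{abs}$ --- a reduction you do not make, and which saves you from having to handle the positive generating acyclic cofibrations separately. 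You instead sketch a re-proof via the ``$h$-cofibration and global equivalence'' argument, and you are right to single out the preservation of global equivalences under $- \boxtimes Z$ as the main obstacle and to note that Proposition \ref{prop:lambda} has no analogue here; but the appeal to ``Schwede's direct analysis'' at that point is exactly the content of the proposition the paper cites, so your sketch amounts to reconstructing the proof of \cite[Prop.~I.7.13]{schwede:GHT050516} rather than invoking it. Nothing in your argument is wrong, but to make it complete you should either supply that citation or carry out the flatness/fixed-point analysis of the box product that it encapsulates.
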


\begin{proof}
Every acyclic cofibration in the positive global model structure on $\IU$ is an
acyclic cofibration in the absolute global model structure. The latter
satisfies the monoid axiom, see \cite[Prop.~1.4.13]{schwede:GHTv1}, hence
so does the former. Again, \cite[Thm.~4.1]{schwedeshipley:algsmods} applies.
\end{proof}

We can now state our second main result.

\begin{thm}[Global model structure for monoids in $*$-modules] \label{thm_C_strict}
The triangle of monoidal Quillen equivalences from Theorem \ref{thm_B_strict}
gives rise to a triangle of Quillen equivalences
between the respective model structures on categories of monoids.
\end{thm}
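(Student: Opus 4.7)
The plan is to apply the standard mechanism for lifting monoidal Quillen equivalences to categories of monoids, as developed by Schwede and Shipley in \cite[Sect.~3--4]{schwedeshipley:algsmods}. Each of the three adjunctions in Theorem \ref{thm_B_strict} is a strong monoidal Quillen equivalence between monoidal model categories satisfying the monoid axiom (by part (h) of Theorem \ref{thm_A_strict} and \cite[Prop.~I.7.13]{schwede:GHT050516}), and the required model structures on monoids are already in place by Theorems \ref{MS_mon_mod_Mstar}, \ref{MS_mon_mod_LU}, and \ref{MS_mon_mod_IU}.

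First I would verify that each of the three adjunctions descends to the categories of monoids. The right adjoints $\qu^\#$, $\Fbox{*}{-}$, and $\qu_*^\#$ are lax symmetric monoidal, hence send monoids to monoids; the left adjoints $\qu$, $-\Lbox *$, and $\qu_*$ are strong symmetric monoidal, so they commute with the free-monoid functor and thus induce left adjoints on the monoid level. Because fibrations and weak equivalences of monoids are detected by the respective forgetful functors, and each right adjoint commutes with those forgetful functors, the lifted right adjoints preserve fibrations and acyclic fibrations. This gives Quillen adjunctions, and commutativity of the triangle on monoids follows from commutativity on the underlying level already established in Theorem \ref{thm_B_strict}.

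Next I would check the Quillen-equivalence criterion: for a cofibrant monoid $A$ on the left and a fibrant monoid $X$ on the right, a map of monoids $F(A) \to X$ should be a weak equivalence if and only if its adjoint $A \to G(X)$ is. Since weak equivalences of monoids are created by the forgetful functor, this reduces to the corresponding statement for the underlying monoidal Quillen equivalence. The only subtlety is that a cofibrant monoid need not itself be cofibrant as an underlying object — most notably in $(\IU)_\mathrm{pos}$, whose unit $\one$ is not positively cofibrant.

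The main obstacle is therefore to show that a cofibrant monoid is sufficiently cofibrant on the underlying level for the original Quillen equivalence to apply. I would argue by cell induction: by Theorems \ref{MS_mon_mod_Mstar}, \ref{MS_mon_mod_LU}, and \ref{MS_mon_mod_IU}, every cofibrant monoid is a retract of a transfinite composition of pushouts of free-monoid cells $T(j)$, where $j$ runs through generating cofibrations of the underlying category. As in the proof of Theorem \ref{MS_mon_mod_LU}, these pushouts can be analysed summand by summand using the monoid axiom and the fact that left adjoints preserve $h$-cofibrations (Lemma \ref{preservation_h-cofibrations}). This filtration shows in particular that any cofibrant monoid in $(\IU)_\mathrm{pos}$ is cofibrant as an underlying orthogonal space in the \emph{absolute} global model structure, which is exactly the hypothesis of Remark \ref{rem_QE_condition} allowing one to conclude that the horizontal lifted adjunction is a Quillen equivalence. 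For the vertical and diagonal adjunctions, the verification is even easier: the derived unit and counit are underlying global equivalences by Proposition \ref{prop:lambda}, since $\bar{\lambda}_A \colon A \to \Fbox{*}{A\Lbox *}$ is a strong global equivalence for every $\L$-space $A$ and both $-\Lbox *$ and $\Fbox{*}{-}$ preserve global equivalences.
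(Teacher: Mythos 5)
Your proposal is correct and follows essentially the same route as the paper: the lifted right adjoints are right Quillen because fibrations and weak equivalences of monoids are created by the forgetful functors, the adjunctions involving $\Mstar$ are handled via Proposition \ref{mirror_image} and Proposition \ref{prop:lambda} without cofibrancy hypotheses, and the horizontal adjunction is reduced to Remark \ref{rem_QE_condition} by showing that a positively cofibrant monoid in $\IU$ is absolutely cofibrant as an underlying orthogonal space (using that $\one$ is absolutely cofibrant). The only divergence is that where you sketch a cell induction, the paper simply invokes the characterization of cofibrations from \cite[Thm.~4.1]{schwedeshipley:algsmods} (recorded as part 3) of Theorem \ref{MS_mon_mod_Mstar} and its analogue in Theorem \ref{MS_mon_mod_IU}); if you do carry out the induction yourself, note that the relevant inputs for underlying \emph{cofibrancy} are the pushout-product axiom and the cofibrant unit, whereas the monoid axiom and $h$-cofibration arguments you cite from the proof of Theorem \ref{MS_mon_mod_LU} only establish acyclicity of relative $J_T$-cell complexes.
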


\begin{proof}
For all three categories, the forgetful functors from monoids preserve and reflect fibrations and weak
equivalences. Thus for all three adjunctions from Theorem~\ref{thm_B_strict}, the lifted right adjoints are
always right Quillen functors, and it remains to show that they are Quillen equivalences. \\
The induced adjunction between monoids in $\LU$ and $\Mstar$ is a Quillen equivalence because the
functor $- \Lbox *$ preserves and reflects global equivalences and the
 counit $\Fbox{*}{X} \Lbox * \to X$ is an isomorphism for all $X \in
\Mstar$, see Proposition \ref{mirror_image}. \\
Now consider the Quillen adjunction between monoids in $\IU$ and $\LU$.
We will show that the derived adjunction is an equivalence of categories. More precisely, we will mimick parts of the proof of
\cite[Thm.~3.9]{schwede:OOUv1} and show that
\begin{enumerate}[(I)]
  \item the derived right adjoint reflects isomorphisms, and
  \item the unit of the derived adjunction is a natural isomorphism.
\end{enumerate}
Since we are working with model structures on monoids created by the forgetful functor, statement (I) immediately follows from fact
(a) in the proof of \cite[Thm.~3.9]{schwede:OOUv1}. In order to prove statement (II), it suffices to show that for all positively
cofibrant monoids $M$ in $\IU$ and some (hence any) fibrant replacement $(-)_{f, mon}$ in the category of monoids in $\LU$, the
underlying map of orthogonal spaces $M \to \qu^\#((\qu(M))_{f, mon})$ is a global equivalence. The monoidal unit in $\IU$ is
absolutely cofibrant, hence the underlying orthogonal space of any positively cofibrant monoid $M$ is absolutely cofibrant, see
Theorem~\ref{MS_mon_mod_IU} and part 3) of Theorem~\ref{MS_mon_mod_Mstar}. Now fact (b) in the proof of
loc.~cit.~asserts that for all positively cofibrant orthogonal spaces $A$ and some (hence any)
fibrant replacement $(-)_{f}$ in $\LU$, the map of orthogonal spaces $A \to \qu^\#((\qu(A))_{f})$ is a global equivalence. Moreover, the
proof given in loc.~cit.~works without changes for absolutely cofibrant orthogonal spaces $A$. As any fibrant replacement of monoids
in $\LU$ is also a fibrant replacement of the underlying $\L$-spaces, we see that our statement (II) follows.
\\
Finally, the adjunction between monoids in $\IU$ and $\Mstar$ is a Quillen equivalence as the composition of two Quillen
equivalences.
\end{proof}

In light of Proposition \ref{A_infty}, Theorem~\ref{thm_C_strict} states that there is an
unambiguous global homotopy theory of $A_\infty$-spaces. We don't know if
this statement is true for $E_\infty$-spaces: The positive global model
structure $(\IU)_\mathrm{pos}$ lifts to commutative monoids, see
\cite[Thm.~2.1.15]{schwede:GHTv1}, but it remains open whether the same
holds for $(\Mstar)_\mathrm{gl}$. The difficulty is in showing that the functor
$(-)^{\Lbox n}/\Sigma_n$ takes acyclic cofibrations to global equivalences.


\section{The Transport Theorem}
\label{sect:proof_thm_A}

We finally give a precise statement and proof of the Transport Theorem.
Throughout this section, let $F$ denote the functor $\Fbox{*}{-} \colon \LU \to
\Mcostar$ and let $R$ be its right adjoint, the forgetful functor $\Mcostar \to
\LU$.

\begin{thm}[Transport Theorem] \label{thm_A_strict}
Let $(\LU)_a$ be any model structure on the category $\LU$
of $\L$-spaces such that
\begin{enumerate}[i)]
  \item it is cofibrantly generated, with sets of generating cofibrations and
  acyclic cofibrations denoted by $I$ and $J$, respectively
  \item all morphisms in $I$ (and hence in $J$) are closed embeddings of
  underlying spaces
  \item the class $\mathcal{W}$ of weak equivalences contains all
  strong global equivalences (in the sense of Definition
  \ref{def:equivalences})
  \item the class of morphisms which are simultaneously weak equivalences and
  closed embeddings is closed under transfinite composition.
\end{enumerate}
Then the category of $*$-modules $\Mstar$ admits a model
structure $(\Mstar)_a$ satisfying the following properties:
\begin{enumerate}[a)]
  \item It is cofibrantly generated, with sets of generating cofibrations and
  acyclic cofibrations given by $I \Lbox *$ and $J \Lbox *$, respectively.
  \item The weak equivalences are precisely those morphisms of $*$-modules which
  are sent to $\mathcal{W}$ under the forgetful functor to $\LU$.
  \item Fibrations are detected by the functor $\Fbox{*}{-} \colon \Mstar \to
  (\LU)_a$.
  \item The adjunction
	\begin{center}
	$ \xymatrix@M=10pt@C=3pc{
		(\LU)_a \ar@<0.7ex>[r]^(0.5){- \Lbox *} &
		(\Mstar)_a \ar@<0.7ex>[l]^(0.5){\Fbox{*}{-}}
	}$
	\end{center}
	is a Quillen equivalence.
\end{enumerate}
Moreover:
\begin{enumerate}[a)] \setcounter{enumi}{4}
  \item If $(\LU)_a$ is right proper, then so is $(\Mstar)_a$. If $(\LU)_a$ is a
  topological model category, then so is $(\Mstar)_a$.
  \item If $(\LU)_a$ satisfies the pushout product axiom \wrt the box product,
  then so does $(\Mstar)_a$.
\end{enumerate}
Assume in addition that all elements of $I$ are $h$-cofibrations in $\LU$ and
$\mathcal{W}$ is a class of equivalences detected by a family of fixed point
functors to spaces. Then:
\begin{enumerate}[a)] \setcounter{enumi}{6}
  \item Both $(\LU)_a$ and $(\Mstar)_a$ are left proper.
  \item Both $(\LU)_a$ and $(\Mstar)_a$ satisfy the unit axiom and
  monoid axiom \cite[Def.~3.3]{schwedeshipley:algsmods}. 
\end{enumerate}
\end{thm}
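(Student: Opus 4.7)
The plan is to apply the classical Kan/Crans transfer theorem along the adjunction $F = {-}\Lbox * \colon (\LU)_a \rightleftarrows \Mstar \colon \Fbox{*}{-} = G$, taking $FI$ and $FJ$ as the generating (acyclic) cofibrations for $(\Mstar)_a$ and declaring a map of $*$-modules to be a fibration or weak equivalence when $G$ sends it to one in $(\LU)_a$. To identify these transferred weak equivalences with the maps whose underlying $\L$-space map lies in $\mathcal{W}$ (as required for (b)), I will use that $\bar{\lambda}_X \colon X \to \Fbox{*}{X}$ is a strong global equivalence by Proposition~\ref{prop:lambda}, hence in $\mathcal{W}$ by hypothesis (iii); two-out-of-three then yields $f \in \mathcal{W}$ if and only if $Gf \in \mathcal{W}$. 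The same argument with $\lambda$ in place of $\bar{\lambda}$ shows that $F$ also preserves and reflects $\mathcal{W}$, a fact used repeatedly below.

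\textbf{Transfer conditions.} Two things must be checked. Smallness of the domains of $FI$ relative to $FI$-cell complexes is formal: colimits in $\Mstar$ are created in $\LU$ by Proposition~\ref{mirror_image}, the maps in $FI$ are still closed embeddings (being quotients of products involving the closed embeddings of $I$), and every $\L$-space is small relative to closed embeddings. The substantive step is to show that every relative $FJ$-cell complex lies in $\mathcal{W}$. Each generator $j \Lbox *$ is in $\mathcal{W}$ by the naturality square $\lambda_B \circ (j \Lbox *) = j \circ \lambda_A$ combined with two-out-of-three. Coproducts and transfinite compositions of such maps remain in $\mathcal{W}$ by (ii) and (iv). The non-trivial case is a single pushout of $j \Lbox *$ along an arbitrary map $A \Lbox * \to X$ into a $*$-module; this is the heart of the argument.

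\textbf{Main obstacle.} This pushout step is the principal difficulty, since $\lambda$ is not invertible on general $\L$-spaces and so one cannot simply identify the pushout of $j \Lbox *$ with the pushout of $j$. My plan is to construct a natural comparison map between the two pushouts using the naturality of $\lambda$, and then apply a gluing-lemma argument exploiting condition (ii) (the legs are closed embeddings in $\U$, hence Hurewicz cofibrations) together with the stability of $\mathcal{W}$ under $\lambda$ and $\bar{\lambda}$ granted by Proposition~\ref{prop:lambda} and hypothesis (iii). Once this is established, properties (a)--(c) drop out of Kan's theorem. For (d), the derived unit $\eta_A \colon A \to \Fbox{*}{A \Lbox *}$ fits in a diagram with $\bar{\lambda}_A$ and $\Fbox{*}{\lambda_A}$ (both strong global equivalences) whose commutativity forces $\eta_A \in \mathcal{W}$; the Quillen equivalence then follows by the standard two-out-of-three criterion. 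Right properness (e) transfers because $G$ preserves pullbacks and detects $\mathcal{W}$; topological enrichment passes along the adjoint pair in the usual way. Property (f) reduces on generating sets to the natural isomorphism $(i \Lbox *) \Box (j \Lbox *) \cong (i \Box j) \Lbox *$ coming from the strong symmetric monoidality of $F$, so the pushout-product axiom descends from $(\LU)_a$; the unit axiom is Proposition~\ref{prop:lambda}.

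\textbf{Additional properties.} For (g) and (h), under the extra hypotheses that $I$ consists of $h$-cofibrations and $\mathcal{W}$ is detected by a family of fixed-point functors to $\U$, I would adapt the arguments already appearing in the proof of Theorem~\ref{MS_mon_mod_LU}. Since $F$ and $- \Lbox Z$ preserve $h$-cofibrations, every cofibration in $(\LU)_a$ and $(\Mstar)_a$ is an $h$-cofibration, and the gluing lemma for $h$-cofibrations and fixed-point-detected equivalences yields left properness in both categories. The monoid axiom reduces, via the factorization $j^{\Lbox n} = (j \Lbox A^{\Lbox (n-1)}) \circ \cdots \circ (B^{\Lbox (n-1)} \Lbox j)$, to closure of the class of $h$-cofibrations in $\mathcal{W}$ under $-\Lbox Z$, pushouts, and transfinite compositions---all immediate from Proposition~\ref{prop:lambda} together with hypothesis (iv).
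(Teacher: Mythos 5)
Your high-level strategy (transfer the cofibrantly generated structure along the adjunction, identify the transferred weak equivalences and prove the Quillen equivalence via the naturality of $\lambda$ and $\bar\lambda$ together with Proposition~\ref{prop:lambda}, reduce e)--h) to the pushout product of generators, the gluing lemma and the factorization of $j^{\Lbox n}$) is the same as the paper's in spirit; the paper merely routes the transfer through the intermediate category $\Mcostar$ of co-$*$-modules, viewed as algebras over the monad $RF$ with $F=\Fbox{*}{-}$, and then carries the result across the equivalence $\Mcostar\simeq\Mstar$ of Proposition~\ref{mirror_image}. The genuine gap is precisely at the step you flag as the main obstacle. You propose to prove that a pushout of $j\Lbox *$ along an arbitrary map $A\Lbox *\to X$ is a weak equivalence by a gluing-lemma argument, justified by the parenthetical claim that the legs are ``closed embeddings in $\U$, hence Hurewicz cofibrations.'' That implication is false: a closed embedding of compactly generated weak Hausdorff spaces need not have the homotopy extension property (e.g.\ $\{0\}\cup\{1/n : n\geq 1\}\hookrightarrow[0,1]$), and hypothesis~ii) of the theorem supplies only closed embeddings --- the assumption that the generators are $h$-cofibrations is available only for the supplementary statements g) and h). So Lemma~\ref{gluing_lemma} cannot be invoked here. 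Moreover, the ``natural comparison map between the two pushouts'' is never pinned down: since $\lambda_A$ is not invertible, a map $A\Lbox *\to X$ does not yield a map $A\to X$, so it is not clear which pushout in $\LU$ you intend to compare against.

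The missing idea is that no gluing lemma is needed. The right adjoint $\Fbox{*}{-}\colon\Mstar\to\LU$ is fully faithful: the counit $\Fbox{*}{X}\Lbox *\to X$ is an isomorphism for every $*$-module $X$ by Proposition~\ref{mirror_image}. Hence every attaching map $A\Lbox *\to X$ is $(-)\Lbox *$ applied to its adjoint $A\to\Fbox{*}{X}$, and since $(-)\Lbox *$ preserves pushouts, the cobase change of $j\Lbox *$ in $\Mstar$ is $(-)\Lbox *$ applied to the cobase change $g_0$ of $j$ in $\LU$. The latter is an acyclic cofibration by the model category axioms for $(\LU)_a$ and a closed embedding by hypothesis~ii); naturality of $\lambda$, hypothesis~iii) and two-out-of-three then give $g_0\Lbox *\in\mathcal{W}$, and hypothesis~iv) handles transfinite compositions. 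This is exactly the ``preimage square'' in the paper's proof of Corollary~\ref{cor_MS_Mcostar}, carried out there for the left adjoint $F=\Fbox{*}{-}$ into $\Mcostar$ rather than for $(-)\Lbox *$ into $\Mstar$; that choice has the further advantage that $\Fbox{*}{-}$, being a mapping-space construction, visibly preserves closed embeddings, whereas your assertion that the maps of $I\Lbox *$ are closed embeddings ``being quotients of products'' is itself unjustified (quotients do not preserve closed embeddings in general), which also affects your smallness argument. With the preimage-square observation in place, the rest of your outline for b), d), e), f), g), h) coincides with the paper's arguments and is fine.
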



Before turning to the proof, we record some technical, but very useful results.

\begin{lemma}[\cite{schwede:GHTv1}, Cor.~A.30] \label{preservation_h-cofibrations}
Let $\mathcal{C, C'}$ be two cocomplete categories which are enriched and
tensored over spaces. Let $G \colon \mathcal{C} \to \mathcal{C'}$ be a
continuous functor that preserves pushouts along $h$-cofibrations and commutes
with taking tensors with the unit interval $I$. Then $G$ takes
$h$-cofibrations in $\mathcal{C}$ to $h$-cofibrations in $\mathcal{C'}$.
\end{lemma}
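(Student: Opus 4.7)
The plan is to exploit the standard reformulation of the homotopy extension property in a category tensored over $\U$: a map $i \colon A \to X$ is an $h$-cofibration if and only if the canonical morphism
\[ \iota_i \colon X \sqcup_A (A \otimes I) \to X \otimes I \]
out of the pushout (formed along $i$ and the inclusion $A \cong A \otimes \{0\} \hookrightarrow A \otimes I$) admits a retraction in $\mathcal{C}$. This is the usual ``mapping cylinder'' characterization of HEP, and follows by unwinding the universal properties of the pushout and of the tensor with $I$ against a test object.

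Given an $h$-cofibration $i \colon A \to X$ in $\mathcal{C}$, I would pick such a retraction $r \colon X \otimes I \to X \sqcup_A (A \otimes I)$ of $\iota_i$ and apply $G$. On the target, the hypothesis that $G$ commutes with tensoring by $I$ gives $G(X \otimes I) \cong GX \otimes I$. On the source, the pushout defining $X \sqcup_A (A \otimes I)$ is taken along the $h$-cofibration $i$, so by hypothesis $G$ preserves it, yielding
\[ G\bigl(X \sqcup_A (A \otimes I)\bigr) \;\cong\; GX \sqcup_{GA} G(A \otimes I) \;\cong\; GX \sqcup_{GA} (GA \otimes I). \]
Naturality of the canonical maps identifies $G(\iota_i)$ with $\iota_{Gi}$ under these isomorphisms, so $G(r)$ becomes a retraction of $\iota_{Gi}$. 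Applying the retraction characterization in the reverse direction in $\mathcal{C'}$ then shows that $Gi$ is an $h$-cofibration.

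The real content lies in the retraction characterization of HEP; once that is in hand, the verification is essentially bookkeeping. The main obstacle I anticipate is notational: one must check that the pushout appearing in $\iota_i$ is built along the map $i$ itself (so that the preservation hypothesis on $G$ applies to this particular pushout), and that the natural isomorphism $G(X \otimes I) \cong GX \otimes I$ is compatible with the inclusion $A \otimes \{0\} \hookrightarrow A \otimes I$ used to form the pushout. Given these naturality checks, the identification of $G(\iota_i)$ with the corresponding canonical map $\iota_{Gi}$ in $\mathcal{C'}$, and hence the transport of the retraction, is automatic.
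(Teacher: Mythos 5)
Your proof is correct and is essentially the argument the paper has in mind: the paper simply cites \cite[Cor.~A.1.11,~ii)]{schwede:GHT050516}, whose proof is exactly this retract characterization of the homotopy extension property via the canonical map $X \sqcup_A (A \otimes I) \to X \otimes I$, transported along $G$ using preservation of the pushout along $i$ and of tensors with $I$. The naturality checks you flag are indeed the only remaining bookkeeping.
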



\begin{lemma}[Gluing lemma] \label{gluing_lemma}
Consider the following pushout diagram in $\LU$ or $\Mstar$, where one of the
maps $f$ or $g$ is an $h$-cofibration.
\begin{center}
$ \xymatrix@M=8pt{
	X \ar[r]^f \ar[d]_g & Y \ar[d]^h \\
	Z \ar[r]_k & W
}$
\end{center}
If $f$ is a global equivalence, then so is $k$. The statement remains true if
``global equivalence'' is replaced with any class of weak equivalences detected
by a family of fixed point functors to spaces.
\end{lemma}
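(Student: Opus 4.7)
The plan is to reduce the statement to the classical gluing lemma for topological spaces. By hypothesis, the relevant class of weak equivalences is detected by a family of fixed point functors $(-)^G \colon \LU \to \U$, so it suffices to verify that $k^G \colon Z^G \to W^G$ is a weak equivalence of spaces for each $G$ in the detecting family.

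The key step is then to show that $(-)^G$ preserves the given pushout square when one of $f$ or $g$ is an $h$-cofibration. An $h$-cofibration in $\LU$ is in particular a $G$-equivariant closed cofibration of underlying spaces; for such a map the fixed point functor commutes with the pushout, since set-theoretically the $G$-fixed points of $Y \cup_X Z$ can be identified with $Y^G \cup_{X^G} Z^G$ (the equivalence relation defining the pushout is generated along the closed equivariant subspace $f(X)$, and the topologies match because closed equivariant inclusions behave well under restriction to $G$-fixed points). Moreover, restricting the equivariant HEP to $G$-fixed points shows that $f^G$ (respectively $g^G$) is itself an $h$-cofibration of spaces. The classical topological gluing lemma then yields that $k^G$ is a weak equivalence of spaces, completing the argument for $\LU$.

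For the $\Mstar$ statement, Proposition \ref{mirror_image} c) tells us that colimits in $\Mstar$ are created in $\LU$, so the pushout square in $\Mstar$ is also a pushout of underlying $\L$-spaces. Both the $h$-cofibration property and the distinguished class of weak equivalences of $*$-modules are detected by the forgetful functor $\Mstar \to \LU$, so the $\Mstar$ case reduces directly to the already established $\LU$ case. The step I expect to be the main obstacle is the commutation of $G$-fixed points with pushouts along equivariant $h$-cofibrations: although this is folklore, a rigorous verification uses either the NDR-pair characterization of $h$-cofibrations or a careful analysis of the pushout construction in compactly generated weak Hausdorff spaces, together with the fact that closed equivariant inclusions admit equivariant Strøm retractions that restrict to fixed points.
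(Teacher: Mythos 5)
Your proposal is correct and follows essentially the same route as the paper: reduce to the classical gluing lemma in spaces by noting that $h$-cofibrations are closed embeddings, so fixed point functors commute with the pushout (and the HEP restricts to fixed points), and handle $\Mstar$ via the fact that its colimits are created in $\LU$. The paper's proof is just a terser version of the same argument.
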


\begin{proof}
Colimits in $\Mstar$ are created in $\LU$. Since one of the legs of the pushout is an $h$-cofibration, it is a closed embedding of
spaces. Thus, taking fixed points with respect to any closed subgroup of $\L(1)$ preserves the pushout. Moreover, by
Lemma~\ref{preservation_h-cofibrations}, taking fixed points sends $h$-cofibrations in $\LU$ (or $\Mstar$, respectively) to
$h$-cofibrations of spaces. Now the claim follows from the gluing lemma for $h$-cofibrations and weak homotopy
equivalences in spaces.
\end{proof}

The next observation is obtained by composing several standard adjunctions.

\begin{lemma} \label{F explicitly}
The underlying $\L$-space of $\Fbox{Y}{Z}$ is given by $\LU(Y, \LU(\L(2), Z))$ with actions as
follows: The space of $\L(1)$-equivariant maps $\LU(\L(2), Z)$ is formed \wrt the left $\L(1)$-action on $\L(2)$ induced by post-composition of linear
maps. This mapping space is an $\L$-space via the right $\L(1)$-action on $\L(2)$ induced by pre-composition on the second
summand of $(\R^\infty)^2$. Finally, the $\L(1)$-action on $\Fbox{Y}{Z}$ comes from the right $\L(1)$-action on $\L(2)$ induced by
pre-composition on the first summand of $(\R^\infty)^2$.
\end{lemma}

\begin{prop} \label{prop properties Fbox}
Let $Y$ be any $\L$-space and consider the functor $\Fbox{Y}{-} \colon \LU \to \LU$. 
\begin{enumerate}[i)]
    \item If $f$ is a closed embedding, then so are $\LU(Y,f)$ and $\Fbox{Y}{f}$.
    \item The functor $\LU(\L(2),-)$ takes sequential colimits along closed embeddings to
    sequential colimits along closed embeddings.
    \item The functor $\Fbox{*}{-}$ preserves sequential colimits along closed embeddings.
    \item If $\mathcal{W}$ is a class of weak equivalences satisfying the assumptions of
    Theorem~\ref{thm_A_strict}, then $\Fbox{*}{-}$ preserves and reflects $\mathcal{W}$.
\end{enumerate}
\end{prop}

\begin{proof}
\emph{Ad i):} The functor $\LU(Y,-)$ preserves closed embeddings because $\LU(Y,Z)$ is
topologized as a closed subspace of $\U(Y,Z)$. The functor $\Fbox{Y}{-}$ is a composition of
$\LU(\L(2),-)$ and $\LU(Y,-)$. \\
\emph{Ad ii):} Any choice of linear isometry $\R^\infty \cong (\R^\infty)^2$ induces an
isomorphism of $\L$-spaces $\L(2) \cong \L(1)$, thus the underlying space of $\LU(\L(2),Z)$ is
naturally isomorphic to $Z$. It follows that for any sequence of closed embeddings of $\L$-spaces
\[ Z_0 \to Z_1 \to Z_2 \to \ldots, \]
the canonical map
\[ \colim_i \LU(\L(2), Z_i) \to \LU(\L(2), \colim_i Z_i) \]
is a homeomorphism of spaces. Moreover, it is equivariant \wrt the $\L(1)$-action induced by
precomposition on the first summand of $(\R^\infty)^2$. \\
\emph{Ad iii):} By part ii) and Lemma~\ref{F explicitly}, it suffices to show that $\LU(*,-)$
preserves sequential colimits along sequences of closed embeddings. This is true because it is
just the fixed point functor $\LU(\L(1)/\L(1), -) \cong (-)^{\L(1)}$. \\
\emph{Ad iv):} Let $f \colon X \to Y$ be in $\mathcal{W}$. In the diagram of $\L$-spaces
\begin{center}
$ \xymatrix@M=10pt@C=3pc{
	X \ar[d]_{f} \ar[r]^{\bar{\lambda}} & \Fbox{*}{X} \ar[d]^{\Fbox{*}{f}} \\
	Y \ar[r]^{\bar{\lambda}} & \Fbox{*}{Y}
}$
\end{center}
both horizontal maps are strong global equivalences by Proposition~\ref{prop:lambda}. The strong
global equivalences are contained in the class of weak equivalences $\mathcal{W}$ by
assumption~iii) of Theorem~\ref{thm_A_strict}, thus $\Fbox{*}{f}$ is a weak equivalence if and
only if $f$ is..
\end{proof}

\begin{lemma} \label{smallness}
All $\L$-spaces, co-$*$-modules and $*$-modules are small \wrt sequences of
closed embeddings in the sense of \cite[Def.~2.1.3]{hovey:modelcats}.
\end{lemma}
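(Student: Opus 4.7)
The plan is to reduce each of the three smallness claims to the standard fact that every object of $\U$ is small with respect to sequences of closed embeddings; this holds in compactly generated weak Hausdorff spaces, with the smallness bound governed by the cardinality of the underlying set. The argument in each category is then a matter of expressing hom-sets and colimits in terms of $\U$ via the adjunctions and full embeddings developed in Section~\ref{sect:prelim}.

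For $\LU$: fix $A \in \LU$ and choose $\kappa$ such that its underlying space is $\kappa$-small in $\U$. The forgetful functor $\LU \to \U$ has both adjoints (see Subsection~\ref{subsect:LU_Mstar}), hence preserves all colimits; in particular, the colimit of a $\lambda$-sequence of closed embeddings $X_\bullet$ in $\LU$ is computed on underlying spaces. Smallness in $\U$ factors any $\L(1)$-equivariant map $f \colon A \to \colim_\beta X_\beta$ continuously through some $X_\alpha$ as $f = i_\alpha \circ g$. The factorization $g$ is automatically $\L(1)$-equivariant: for $\phi \in \L(1)$ and $a \in A$, the two elements $g(\phi \cdot a)$ and $\phi \cdot g(a)$ share the common image $\phi \cdot f(a)$ under the injective map $i_\alpha$. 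For $\Mstar$, Proposition~\ref{mirror_image}~c) says colimits are created by the full inclusion $\Mstar \hookrightarrow \LU$, so both colimits and hom-sets of any $\lambda$-sequence of closed embeddings in $\Mstar$ agree with those computed in $\LU$, and smallness transfers verbatim.

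For $\Mcostar$, colimits are not created in $\LU$; instead $\colim^{\Mcostar}_\beta X_\beta \cong \Fbox{*}{\colim^{\LU}_\beta X_\beta}$, since the reflector $\Fbox{*}{-} \colon \LU \to \Mcostar$ is left adjoint to the fully faithful inclusion. For $A \in \Mcostar$ I would then use the chain of natural bijections
\begin{align*}
\Mcostar(A, \colim^{\Mcostar}_\beta X_\beta)
&\cong \LU(A, \Fbox{*}{\colim^{\LU}_\beta X_\beta}) \\
&\cong \LU(A \Lbox *, \colim^{\LU}_\beta X_\beta) \\
&\cong \colim_\beta \LU(A \Lbox *, X_\beta) \\
&\cong \colim_\beta \LU(A, X_\beta) \\
&\cong \colim_\beta \Mcostar(A, X_\beta),
\end{align*}
invoking fullness of both inclusions, the adjunction $-\Lbox * \dashv \Fbox{*}{-}$ on $\LU$, smallness of $A \Lbox *$ in $\LU$ from the previous step, and the fact that $\bar{\lambda}_{X_\beta} \colon X_\beta \to \Fbox{*}{X_\beta}$ is an isomorphism because each $X_\beta$ is a co-$*$-module.

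There is no substantial obstacle in this argument; the only delicate point is the mismatch between $\LU$-colimits and $\Mcostar$-colimits, which is entirely absorbed by the adjunction $-\Lbox * \dashv \Fbox{*}{-}$.
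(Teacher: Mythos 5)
Your proposal is correct, and its treatment of $\LU$ coincides with the paper's: colimits are created in $\U$, underlying spaces are small with respect to closed embeddings, and equivariance of the factorization is automatic. For the other two categories you take a genuinely different route. The paper's sketch first proves that $F = \Fbox{*}{-}$, written out as the double mapping-space construction $\LU(*, \LU(\L(2), -))$, commutes with sequential colimits along closed embeddings; to iterate through the inner hom it must upgrade the smallness of $\L$-spaces to an enriched statement (the colimit comparison map is a \emph{homeomorphism}, not merely a bijection). From this the case of co-$*$-modules follows, and $*$-modules are then handled by transporting along the equivalence $\Mcostar \simeq \Mstar$. You instead dispatch $\Mstar$ directly from the creation of colimits in $\LU$ (Proposition~\ref{mirror_image}~c)) and handle $\Mcostar$ by shuttling hom-sets across the adjunction $- \Lbox * \dashv \Fbox{*}{-}$ together with the reflective-subcategory formula for colimits; this only ever invokes set-level smallness of the $\L$-space $A \Lbox *$ and the unit isomorphisms $\bar{\lambda}_{X_\beta}$. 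Your argument is arguably more elementary, avoiding the homeomorphism refinement entirely, at the cost of not establishing the (independently useful) fact that $\Fbox{*}{-}$ preserves these colimits. The only point to make explicit in a final write-up is that your chain of bijections is natural in $\beta$ and composes to the canonical comparison map required by Hovey's definition; since every step is a natural isomorphism, this is routine.
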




\begin{proof}
The forgetful functors $\Mstar \to \LU$ and $\LU \to \U$ both have left adjoints, so colimits in either category can be formed in
$\U$. Consequently, all $\L$-spaces and $*$-modules are small \wrt sequences of closed embeddings. Colimits in $\Mcostar$ are
computed by applying $F = \Fbox{*}{-}$ to a colimit formed in $\LU$. By Proposition \ref{prop
properties Fbox}, $F$ preserves sequential colimits along closed embeddings, thus the smallness
statement for $\Mcostar$ follows from the one for $\LU$.
\end{proof}

In order to prove Theorem~\ref{thm_A_strict}, we construct an intermediate model
structure $(\Mcostar)_a$ on co-$*$-modules, thus exploiting the fact that, up
to equivalence of categories, $\Mstar$ is a category of algebras over a
well-behaved monad. This approach was used by Blumberg, Cohen and
Schlichtkrull to transport their non-equivariant model structure
in \cite[Sect.~4.6]{BCS:THH}, and goes back to \cite{EKMM}. Consider the
following diagram:

\numberwithin{equation}{section}
\setcounter{equation}{6}
\begin{equation} \label{lifting_diagram}
\xymatrix@M=10pt@C=4pc{
\LU \ar@<0.7ex>[r]^(.4){F} & \LU\left[
\mathbb{F} \right] \cong \Mcostar \ar@<0.7ex>[l]^(.6){R} \ar@<0.7ex>[r]^(.6){-
\Lbox *} & \Mstar \ar@<0.7ex>[l]^(.38){\Fbox{*}{-}} }
\end{equation}
\setcounter{definition}{7}

We have seen in Proposition \ref{mirror_image} that the adjunction on the right
hand side is an equivalence of categories. The proof of the identification $\LU\left[
\mathbb{F} \right] \cong \Mcostar$ is identical with the proof of
\cite[Prop.~II.2.7]{EKMM}, where $\mathbb{F}$ denotes the monad $\mathbb{F}
= RF$ associated to the adjunction on the left hand side.

The proof of Theorem~\ref{thm_A_strict} is built around a standard result which transports model structures along adjunctions and is
sometimes referred to as ``Kan's transfer theorem''. The formulation below is a slight variation of
\cite[Lemma~2.3]{schwedeshipley:algsmods}. Our condition (R3) is more general than that of Schwede-Shipley, but may be harder to
verify in general. In the case of interest in this paper, it comes for free.

\begin{thm}[Lifting of model structures] \label{lifting_thm}
Let $\mathcal{C}$ be a cofibrantly generated model category and $I$
(respectively $J$) a set of generating (acyclic) cofibrations. Let $T$ be a
monad on $\mathcal{C}$ and denote by $I_T$ and $J_T$ the images of $I$ and $J$,
respectively, under the free $T$-algebra functor. Assume that
\begin{enumerate}[(R1)]
    \item the domains of $I_T$ and $J_T$ are small relative to
    $I_T \mbox{-} cell$ and $J_T \mbox{-} cell$, respectively
    \item every morphism in $J_T \mbox{-} cell$ is sent to a weak equivalence in
    $\mathcal{C}$ under the forgetful functor
    \item the category $\mathcal{C}\left[ T \right]$ of $T$-algebras is
    cocomplete.
\end{enumerate}
Then $\mathcal{C}\left[ T \right]$ is a cofibrantly
generated model category with generating sets of (acyclic) cofibrations $I_T$
(respectively $J_T$), and weak equivalences and fibrations detected by the
forgetful functor to $\mathcal{C}$.
\end{thm}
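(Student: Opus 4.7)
The plan is the standard Crans--Kan transfer / Schwede--Shipley argument: define weak equivalences and fibrations in $\mathcal{C}\left[ T \right]$ to be those morphisms whose underlying maps in $\mathcal{C}$ lie in the respective classes, and take cofibrations to be the maps with the left lifting property against acyclic fibrations. The 2-out-of-3 property and closure of weak equivalences and fibrations under retracts are then immediate from the corresponding axioms in $\mathcal{C}$, because the forgetful functor $R$ preserves retracts and commutes with domains/codomains. The only work is to produce the two functorial factorizations and to verify the second lifting axiom.

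First I would use the adjunction between the free $T$-algebra functor and the forgetful functor to identify the lifting properties: a morphism $f$ in $\mathcal{C}\left[ T \right]$ has the right lifting property against $I_T$ (respectively $J_T$) if and only if $R(f)$ has the right lifting property against $I$ (respectively $J$) in $\mathcal{C}$, i.e.\ if and only if $R(f)$ is an acyclic fibration (respectively fibration). With this identification, the factorization axiom is produced by Quillen's small object argument applied to $I_T$ and $J_T$ inside $\mathcal{C}\left[ T \right]$: condition (R3) ensures the requisite pushouts and transfinite colimits exist, while (R1) supplies the smallness hypothesis needed to terminate the transfinite construction. The output is two functorial factorizations: every map in $\mathcal{C}\left[ T \right]$ decomposes as an $I_T$-cell complex followed by a map with RLP against $I_T$, and as a $J_T$-cell complex followed by a map with RLP against $J_T$.

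The main obstacle is showing that every acyclic cofibration in $\mathcal{C}\left[ T \right]$ lies in $J_T\text{-cof}$, which is what allows one to deduce the lifting of acyclic cofibrations against fibrations. This is where condition (R2) is essential. Given an acyclic cofibration $f$, factor it via the $J_T$-factorization as $f = p \circ j$ with $j$ a relative $J_T$-cell complex and $p$ having RLP against $J_T$. By (R2), $R(j)$ is a weak equivalence in $\mathcal{C}$, so $j$ is a weak equivalence in $\mathcal{C}\left[ T \right]$; by 2-out-of-3, $p$ is an acyclic fibration. Since $f$ is a cofibration, it lifts against $p$, and the retract argument presents $f$ as a retract of $j$, hence $f \in J_T\text{-cof}$. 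Combined with the identification from the previous paragraph, this yields the second lifting axiom and simultaneously exhibits $I_T$ and $J_T$ as generating sets of cofibrations and acyclic cofibrations, establishing cofibrant generation.
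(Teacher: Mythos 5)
Your proposal is correct and is essentially the argument the paper has in mind: the paper gives no proof of its own but defers to \cite[Lemma~2.3]{schwedeshipley:algsmods}, whose proof is exactly this standard transfer argument (adjunction identification of $I_T$-/$J_T$-injectives with underlying acyclic fibrations/fibrations, the small object argument using (R1) and (R3), and the retract argument with (R2) supplying acyclicity of relative $J_T$-cell complexes). The only point you leave tacit is completeness of $\mathcal{C}\left[T\right]$, which is automatic since limits of $T$-algebras are created by the forgetful functor.
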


\begin{cor} \label{cor_MS_Mcostar}
Given a model category $(\LU)_a$ as in Theorem \ref{thm_A_strict}, the category
of co-$*$-modules admits a cofibrantly generated model structure $(\Mcostar)_a$
with weak equivalences and fibrations detected by the forgetful functor $R
\colon \Mcostar \to (\LU)_a$. Sets of generating cofibrations and acyclic
cofibrations are given by $\Fbox{*}{I}$ and $\Fbox{*}{J}$, respectively.
\end{cor}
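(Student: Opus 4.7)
The plan is to apply the lifting theorem (Theorem \ref{lifting_thm}) to the monad $\mathbb{F} = RF$ on $(\LU)_a$, using the identification $\LU[\mathbb{F}] \cong \Mcostar$ recalled above. The candidate generating cofibrations and acyclic cofibrations are $F(I) = \Fbox{*}{I}$ and $F(J) = \Fbox{*}{J}$, respectively. I need to verify the three hypotheses (R1)--(R3); condition (R3), the cocompleteness of $\Mcostar$, is the dual of Proposition \ref{mirror_image}(c).

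For the smallness condition (R1), I first observe that $F = \Fbox{*}{-}$ sends closed embeddings to closed embeddings, as can be read off from the explicit description of $\Fbox{*}{-}$ used in the proof of Lemma \ref{smallness}, together with the standard fact that $\LU(K,-)$ preserves closed embeddings. Since pushouts in $\Mcostar$ are obtained by forming pushouts in $\LU$ and then applying $F$ (dual to Proposition \ref{mirror_image}(c)), and since closed embeddings in $\U$ are stable under pushouts and transfinite composition, every relative $F(I)$-cell or $F(J)$-cell complex forgets to a sequence of closed embeddings in $\LU$. Smallness of the domains then follows from Lemma \ref{smallness}.

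The core step is (R2): every map in $F(J)$-cell must forget to a weak equivalence in $(\LU)_a$. For a single pushout step along $F(j)$ with attaching map $F(A) \to X$, adjointness yields a corresponding map $A \to R(X) = X$ in $\LU$ and a pushout square there with pushout $X'$; the dual of Proposition \ref{mirror_image}(c) identifies the $\Mcostar$-pushout as $F(X') = \Fbox{*}{X'}$. The induced structural map $X \to \Fbox{*}{X'}$ factors as the composite $X \xrightarrow{\bar\lambda_X} \Fbox{*}{X} \xrightarrow{\Fbox{*}{k}} \Fbox{*}{X'}$, where $k \colon X \to X'$ is the pushout of $j$ in $\LU$. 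Now $\bar\lambda_X$ is an isomorphism since $X \in \Mcostar$, and $k$ is an acyclic cofibration in $(\LU)_a$; moreover, $\Fbox{*}{-}$ preserves weak equivalences in $(\LU)_a$, which follows from naturality of $\bar\lambda$, 2-out-of-3, and the fact (hypothesis (iii) combined with Proposition \ref{prop:lambda}) that each $\bar\lambda_Y$ lies in $\mathcal{W}$. Hence $X \to \Fbox{*}{X'}$ lies in $\mathcal{W}$, and by the closed embedding preservation from (R1) it is also a closed embedding. Hypothesis (iv) then propagates this property to arbitrary transfinite composites, verifying (R2).

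The main obstacle is tracking the interplay between pushouts in $\Mcostar$ and in $\LU$ and ensuring that $\Fbox{*}{-}$ transports the required properties at each step. These are formal consequences of the mirror image structure (Proposition \ref{mirror_image}), but need to be unpacked so that weak equivalences and closed embeddings propagate correctly through the cell complex analysis; hypothesis (iv) is precisely what bridges the single-step argument to the full transfinite one.
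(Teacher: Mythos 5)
Your proposal is correct and takes essentially the same approach as the paper: it verifies the hypotheses of Theorem \ref{lifting_thm} for the monad $\mathbb{F}=RF$, handling (R2) via the same adjoint ``preimage square'' in $\LU$, the cobase change of $j$ there, and the preservation of weak equivalences and closed embeddings by $F$, with hypothesis (iv) supplying the transfinite step. You merely spell out a few points the paper leaves implicit (why $F$ preserves closed embeddings and weak equivalences, and the factorization of the structural map through $\bar\lambda_X$).
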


\begin{proof}
We verify the requirements of Theorem \ref{lifting_thm}. All colimits exist
since the forgetful functor to $\L$-spaces has a left adjoint. The
smallness statement is a special case of Lemma \ref{smallness}.
We now prove (R2): Let $j \colon A \to B$ be a morphism in $J$. Let $Y$ be the pushout of the left
hand square of co-$*$-modules and let $Y_0$ be the pushout in the right hand square of
$\L$-spaces:
\begin{center}
$ \xymatrix@M=8pt{
	F(A) \ar[r] \ar[d]_{F(j)} & X \ar[d]^g & \; &
	A \ar[r] \ar[d]_j & RX \ar[d]^{g_0} \\
	F(B) \ar[r] & Y & \; &
	B \ar[r] & Y_0
}$
\end{center}
Under the functor $F$, the right hand square is taken to the pushout square
\begin{center}
$ \xymatrix@M=10pt@C=3pc{
	F(A) \ar[r] \ar[d]_{F(j)} & (FR)(X) \cong X \ar[d]^{F(g_0)} \\
	F(B) \ar[r] & F(Y_0),
}$
\end{center}
but $(FR)(X) \cong X$, hence $F(Y_0) \cong Y$ by uniqueness of the pushout,
and the maps $g$ and $F(g_0)$ agree under this isomorphism. The map $j$ is an acyclic cofibration
and a closed embedding by assumption. Both of these properties are stable under cobase change,
hence $g_0$ is an acyclic cofibration and a closed embedding. Then by Proposition~\ref{prop
properties Fbox}, the map $g \cong F(g_0)$ is a closed embedding and a weak equivalence. \\
Finally, we claim that the collections of maps that are simultaneously closed
embeddings and weak equivalences is closed under transfinite composition in $\Mcostar$. This is
true in $\LU$ by assumption iv) of Theorem~\ref{thm_A_strict}, but colimits in $\Mcostar$ are not
constructed in $\LU$. More precisely, they are obtained by applying $F$ to a colimit formed in
$\LU$. Since $F$ preserves the class of weak equivalences by Proposition~\ref{prop properties
Fbox}, the claim follows. Altogether, we have shown that all relative $J_T$-cell complexes are
weak equivalences.
\end{proof}

We are now ready to give the

\begin{proof}[Proof of Theorem~\ref{thm_A_strict}]
The model structure $(\Mcostar)_a$ from Corollary \ref{cor_MS_Mcostar}
transports along the equivalence of categories
\[ \xymatrix@M=10pt@C=4pc{
\Mcostar \ar@<0.7ex>[r]^(.5){-
\Lbox *} & \Mstar \ar@<0.7ex>[l]^(.5){\Fbox{*}{-}} }
\]
to a model structure $(\Mstar)_a$ with weak equivalences and fibrations detected
by the composite $R \circ \Fbox{*}{-} \colon \Mstar \to \LU$, which proves
c).
Sets of generating (acyclic) cofibrations are given by the images of $I$ (resp.
$J$) under $\Fbox{*}{-} \Lbox * \colon \LU \to \Mstar$, which is
naturally equivalent to the functor $(-) \Lbox *$ by Proposition
\ref{mirror_image}, thus proving part~a).
Hypothesis iii) and Proposition \ref{prop:lambda} imply that $\Fbox{*}{-}$ preserves and
reflects the weak equivalences $\mathcal{W}$; now b) follows immediately. In order to show d),
it suffices to show that the left hand adjunction in (\ref{lifting_diagram}) is a Quillen
equivalence. It is a Quillen adjunction by construction. It is a Quillen equivalence
because $RF$ preserves and reflects weak equivalences and because the unit $\bar{\lambda} \colon
X \to RF(X)$ is a strong global equivalence, see Proposition~\ref{prop properties Fbox} and
Proposition~\ref{prop:lambda}, respectively. \\
Now we proof the enhancements e) through h): \\
\emph{Ad e):} Assume that $(\LU)_a$ is right proper. Then so is $(\Mstar)_a$
since the right adjoint $R \circ \Fbox{*}{-} \colon \Mstar \to \LU$ preserves
pullbacks, and preserves and reflects weak equivalences and fibrations. \\
Now assume that $(\LU)_a$ is topological. Let $f \colon X \to Y$ be a generating
cofibration for $(\LU)_a$ and $i \colon A \to B$ any cofibration in $\U$. By
assumption, the pushout product \[ f \Box i \colon P = Y \times A \cup_{X \times
A} X \times B \to Y \times B \] is again a cofibration in $(\LU)_a$. The map $f
\Lbox *$ is a generating cofibration in $(\Mstar)_a$ whose pushout product with
$i$ is isomorphic to
\[ (f \Box i) \Lbox * \colon P \Lbox * \to (Y \times B) \Lbox *.\]
As $- \Lbox * \colon (\LU)_a \to (\Mstar)_a$ is a left Quillen functor, this map
is a cofibration in $\Mstar$. If $f$ is a generating acyclic cofibration or $i$
any acyclic cofibration, then $f \Box i$ is an acyclic cofibration in $\LU$,
hence so is $(f \Lbox *) \Box i \cong (f \Box i) \Lbox *$ in $\Mstar$.
\\
\emph{Ad f):} There are natural isomorphisms
\[ (X \Lbox *) \Lbox (X' \Lbox *) \cong (X \Lbox X') \Lbox * \]
for all $\L$-spaces $X$ and $X'$. Similar reasoning as in the proof of g)
then shows that for two generating cofibrations $f \colon A
\to B$ and $f' \colon A' \to B'$ for $(\LU)_a$, the pushout product of $f \Lbox
*$ and $f' \Lbox *$ is isomorphic to $(f \Box f') \Lbox *$, hence is a
cofibration in $\Mstar$, and acyclic if $f$ or $f'$ is a generating acyclic
cofibration. \\
\emph{Ad g):} Left properness follows immediately from Lemma \ref{gluing_lemma}.
\\
\emph{Ad h):} The box product is weakly equivalent to the categorical product
by Proposition~\ref{prop:lambda} and the assumption that any strong global
equivalence is a weak equivalence in $(\LU)_a$. As the weak equivalences are detected by
fixed point functors, the functor $(-) \Lbox Z$ preserves weak equivalences,
where $Z \in \LU$ is any $\L$-space. The unit axiom follows immediately. \\
Let $\mathcal{A}$ denote the class of morphisms $j \Lbox Z$ where
$j$ is an acyclic cofibration and $Z \in \LU$ is arbitrary. All
cofibrations in $(\LU)_a$ are $h$-cofibrations. As just observed, the functor
$(-) \Lbox Z$ preserves weak equivalences. Because of
Lemma \ref{preservation_h-cofibrations}, it always preserves $h$-cofibrations,
too. Moreover, the class of weak equivalences which are $h$-cofibrations is
stable under cobase changes (by Lemma \ref{gluing_lemma}), transfinite composition, and
retracts. Thus, all relative $\mathcal{A}$-complexes are weak equivalences. \\
The same proof applies to $(\Mstar)_a$.
\end{proof}

\phantomsection
\addcontentsline{toc}{section}{Bibliography}
\bibliographystyle{amsplain}
{\footnotesize \bibliography{topology} }

\providecommand{\bysame}{\leavevmode\hbox to3em{\hrulefill}\thinspace}
\providecommand{\MR}{\relax\ifhmode\unskip\space\fi MR }
\providecommand{\MRhref}[2]{%
  \href{http://www.ams.org/mathscinet-getitem?mr=#1}{#2}
}
\providecommand{\href}[2]{#2}
\begin{thebibliography}{10}

\bibitem{BCS:THH}
A.~J. Blumberg, R.~L. Cohen, and C.~Schlichtkrull, \emph{{Topological
  Hochschild homology of Thom spectra and the free loop space}}, Geom. Topol.
  14 (2010), no.~2, 1165--1242.

\bibitem{boehme:masters}
B.~B\"ohme, \emph{Global model structures for $*$-modules}, Master's thesis,
  University of Bonn, 2015.

\bibitem{broeckertomdieck}
T.~Br\"ocker and T.~tom Dieck, \emph{{Representations of compact Lie groups}},
  Graduate Texts in Mathematics 98, Springer-Verlag, New York, 1985.

\bibitem{DS:modelcats}
W.~G. Dwyer and J.~Spalinski, \emph{{Homotopy theories and model categories}},
  {Handbook of algebraic topology}, {73-126, North-Holland, Amsterdam}, 1995.

\bibitem{EKMM}
A.~D. Elmendorf, I.~Kriz, M.~A. Mandell, and J.~P. May, \emph{{Rings, modules
  and algebras in stable homotopy theory. With an appendix by M. Cole}},
  Mathematical Surveys and Monographs 47, American Mathematical Society,
  Providence, RI, 1997.

\bibitem{hausmann:sym_spectra_global_fin}
M.~Hausmann, \emph{Symmetric spectra model global homotopy theory of finite
  groups}, {\tt arXiv:1509.09270}.

\bibitem{HHR}
M.~A. Hill, M.~J. Hopkins, and D.~C. Ravenel, \emph{{On the non-existence of
  elements of Kervaire invariant one}}, {\tt arXiv:0908.3724}.

\bibitem{hovey:modelcats}
M.~Hovey, \emph{{Model categories}}, Mathematical Surveys and Monographs, 63,
  American Mathematical Society, Providence, RI, 1999.

\bibitem{lind:diagram_spaces}
J.~A. Lind, \emph{{Diagram spaces, diagram spectra and spectra of units}},
  Algebr. Geom. Topol. 13 (2013), no.~4, 1857--1935.

\bibitem{mandellmay:eqvar_orth_spectra}
M.~A. Mandell and J.~P. May, \emph{{Equivariant orthogonal spectra and
  $S$-modules}}, Mem. Amer. Math. Soc. 159 (2002), no.~755.

\bibitem{schwede:GHTv1}
S.~Schwede, \emph{{Global homotopy theory}}, {to appear in: New mathematical
  monographs}, {Cambridge University Press}, {preprint available as
  \texttt{arXiv:1802.09382v1}}.

\bibitem{schwede:lecture_ESHT}
\bysame, \emph{{Lectures on equivariant stable homotopy theory}}, {Lecture
  notes, Sept 17, 2015. Available on the author's website:
  \texttt{http://www.math.uni-bonn.de/people/schwede/}}.

\bibitem{schwede:OOUv1}
\bysame, \emph{{Orbispaces, orthogonal spaces, and the universal compact Lie
  group}}, {{\tt arXiv:1711.06019v1}}.

\bibitem{schwedeshipley:algsmods}
S.~Schwede and B.~E. Shipley, \emph{{Algebras and modules in monoidal model
  categories}}, Proc. London Math. Soc. (3) 80 (2000), no.~2, 491--511.

\end{thebibliography}

\end{document}